\begin{document}

\title{On the infimum convolution inequalities\\ with improved constants}

\author{Marcin Ma\l ogrosz
\footnote{Institute of Mathematics, Polish Academy of Sciences, Warsaw Poland \newline (malogrosz@impan.pl)}}
\date{}

\newtheorem{defi}{Definition}
\newtheorem{theo}{Theorem}
\newtheorem{prop}{Proposition}
\newtheorem{lem}{Lemma}
\newtheorem{rem}{Remark}
\newcommand{\ov}[1]{\overline{#1}}
\newcommand{\un}[1]{\underline{#1}}
\newcommand{\bsym}[1]{\boldsymbol{#1}}
\newcommand{\n}[1]{\lVert#1\rVert}
\newcommand{\eq}[1]{\begin{align}#1\end{align}}

\providecommand{\bs}{\begin{subequations}}
\providecommand{\es}{\end{subequations}}

\maketitle

\begin{abstract}
\noindent The goal of the article is to improve constants in the infimum convolution inequalities (IC for short) which were introduced by R. Lata{\l}a and J.O. Wojtaszczyk. We show that the exponential distribution satisfies IC with constant $2$ but not with constant $1$, which implies that linear functions are not extremal in Maurey's property $(\tau)$. Using transport of measure we use this result to better constants in the IC inequalities for product symmetric log-concave measures as well as in the Talagrand's two level concentration inequality for the exponential distribution.
\end{abstract}


\textbf{Keywords} infimum convolution inequalities, property $(\tau)$, concentration of measure, log-concave measures

\section{Introduction}

In the seminal paper \cite{Mau} B. Maurey introduced the property $(\tau)$ for a probability measure $\mu$ with a cost function $W$ (see Definition \ref{defi:1}) and established its connections with the concentration of measure phenomenon (see Proposition \ref{concentration}). Later in \cite{LW} R. Lata{\l}a and J.O. Wojtaszczyk showed that if a pair $(\mu,W)$ satisfies property $(\tau)$, where $\mu$ is a symmetric probability measure and $W$ is a convex cost function then $W\leq\Lambda^*_{\mu}$, where $\Lambda^*_{\mu}$ is the Cramer transform of $\mu$. This observation led to the definition of the so called infimum convolution inequality, IC for short. Namely a measure $\mu$ satisfies $IC(\beta)$ if the pair $(\mu,\Lambda^*_{\mu}(\cdot/\beta))$ satisfies property $(\tau)$. Lata{\l}a and Wojtaszczyk proved that the symmetric exponential distribution $d\nu=\frac{1}{2}e^{-|x|}dx$ satisfies $IC(9)$ and used that result to prove that any symmetric product log-concave fully supported probability measure satisfies $IC(48)$. Moreover using the connection of IC with the concentration of measure phenomenon the authors proved the two level concentration inequality for product exponential distribution $\nu^n$ with constants $C_1=18,\ C_2=6\sqrt{2}$, obtained previously with rather large constants by Talagrand in \cite{Tal}.\\
The goal of this paper is to improve constants in the inequalities obtained in $\cite{LW}$. We show that any Gaussian measure satisfies $IC(1)$ (Theorem \ref{tw:4}), while one-sided and symmetric exponential distributions satisfy $IC(2)$ (Theorem \ref{tw:5}) but not $IC(1)$ (Theorem \ref{tw:7}). The latter result comes as a surprise as it shows that linear functions are not extremal in the property $(\tau)$ for the exponential measure. Next we prove that any symmetric product log-concave fully supported probability distribution satisfies $IC(9.61929\ldots)$ (Theorem \ref{tw:10}). Finally we obtain Talagrand's two level concentration inequality with constants $C_1=4, C_2=8$ (Theorem \ref{tw:6}).

\subsection{Notation}
In the whole paper $\mu$ denotes a probability measure on the Euclidean space $\mathbb{R}^n$ with scalar product $\langle x,y\rangle=\sum_{i=1}^nx_{i}y_{i}$. We assume that all functions that are considered are Lebesgue measurable. Moreover we use the following notation

\begin{itemize}
\item For $x,y\in\mathbb{R}$ we put	$x\wedge y=\min\{x,y\}, \ x\vee y=\max\{x,y\}$;
\item For a map $T:\mathbb{R}^n\to\mathbb{R}^k$ we denote by $T_{\#}\mu$ the transport of $\mu$ by $T$ defined by $T_{\#}\mu(A)=\mu(T^{-1}(A));$ 
\item For a nonnegative function $W\colon\mathbb{R}^{n}\to[0;+\infty]$ we denote $B_{W}(t)=\{x\in\mathbb{R}^n: W(x)\leq t\}, \ B_{W}=B_{W}(1);$
\item By $|x|_{p}$ we denote $l_p$ norm on $\mathbb{R}^n$ given by $|x|_{p}=\sqrt[p]{\sum_{i=1}^{n}|x_{i}|^{p}}$. Moreover we put $B_{p}(t)=B_{|\cdot|_{p}}(t)$;
\item $g_{\mu}=d\mu/dx$ - density of measure $\mu$ with respect to the Lebesgue measure;
\item $\mu'=(-Id)_{\#}\mu$ - reflection of measure $\mu$ with respect to the origin;
\item $\overline{\mu}=\mu\ast\mu'$ - convolution of $\mu$ and $\mu'$; 
\item $\mu^{\ast n}$ - $n$-th convolution power, $\mu^{n}=\mu^{\otimes n}$ - $n$-th product power;
\item $\gamma$ - standard Gaussian distribution ($g_{\gamma}(x)=\frac{1}{\sqrt{2\pi}}e^{-x^{2}/2}$);  
\item $\nu_{+}$ - exponential distribution ($g_{\nu_{+}}(x)=e^{-x}\mathbb{I}_{[0;\infty)}(x)$);
\item $\nu=\overline{\nu_{+}}$ - symmetric exponential distribution ($g_{\nu}(x)=\frac{1}{2}e^{-|x|}$).
\end{itemize}

\section{Preliminaries}

\subsection{Infimum convolution and property $(\tau)$}

\begin{defi}[Infimum convolution operator $\boxempty$]
\label{defi:1}
For functions $f,g:\mathbb{R}^n\to(-\infty;\infty]$ the infimum convolution of $f$ and $g$ is
\begin{align}
\label{eq:1}
(f\boxempty g)(x)=\inf\{f(x-y)+g(y): \ y\in\mathbb{R}^n\}.
\end{align} 
\end{defi}

In the next Proposition we collect the properties of the infimum convolution operator. \\

\begin{prop}
\label{f:1}
For functions $f,g,h\colon\mathbb{R}^{n}\to(-\infty;\infty]$ one has
\begin{enumerate}
\item $f\boxempty g=g\boxempty f$ (commutativity).
\item $f\boxempty e=f$, where $
e(x)=\left\{\begin{array}{ll} 0 & x=0\\ \infty & x\neq 0\end{array}\right.$ (existence of neutral element).


\item $(f\boxempty g)\boxempty h=f\boxempty (g\boxempty h)$ (associativity).
\item $f\boxempty0=\inf f$.
\item $f\boxempty g + \inf h\leq f\boxempty (g+h)\leq f\boxempty g + \sup h$.
\item If $f_{n}\rightrightarrows f$ then $g\boxempty f_{n}\rightrightarrows g\boxempty f$ where $\rightrightarrows$ denotes uniform convergence.
\item If $f$ is convex then $(f\boxempty f)(x)= 2f(x/2)$.
\item If $g$ is convex and $f(x)=g(2x)/2$ then $f\boxempty f=g$. 
\end{enumerate}
\end{prop}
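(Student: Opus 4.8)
The plan is to prove each of the eight items of Proposition \ref{f:1} essentially from the definition \eqref{eq:1}, handling the purely order-theoretic and algebraic properties first and saving the two convexity statements for last. For commutativity (item 1), I would simply substitute $z=x-y$ in the infimum defining $f\boxempty g$; since $z$ ranges over all of $\mathbb{R}^n$ as $y$ does, the two infima coincide. For the neutral element (item 2), observe that $f(x-y)+e(y)=+\infty$ unless $y=0$, so the infimum is attained at $y=0$ and equals $f(x)$. Item 4 is the special case where the second function is the constant $0$: then $(f\boxempty 0)(x)=\inf_y f(x-y)=\inf f$, independent of $x$. These three are immediate.

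For associativity (item 3), I would unfold both sides into double infima: the left-hand side is $\inf_{y}\inf_{z}\{f(x-y-z)+g(z)+h(y)\}$ while the right-hand side is $\inf_{u}\inf_{v}\{f(x-u)+g(u-v)+h(v)\}$, and a change of variables (setting $u=y+z$, $v=y$, so that $x-u=x-y-z$, $u-v=z$, $v=y$) shows the two triple sums range over the same set, whence the infima agree. One must be a little careful with the extended-real-valued arithmetic so that $+\infty$ terms are handled consistently, but since all functions map into $(-\infty;\infty]$ there are no $\infty-\infty$ ambiguities. For item 5, the monotonicity-type bound, I would use that $\inf h\le h(y)\le\sup h$ pointwise, add $f(x-y)+g(y)$ throughout, and take the infimum over $y$; the left inequality follows because the infimum of a sum is at least the sum involving the constant lower bound $\inf h$, and the right inequality follows because replacing $h(y)$ by $\sup h$ can only increase each term. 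Item 6 (uniform-convergence continuity) follows from item 5 applied with $h=f_n-f$: since $f_n\rightrightarrows f$ means $\sup|f_n-f|\to 0$, both $\inf(f_n-f)$ and $\sup(f_n-f)$ tend to $0$, squeezing $g\boxempty f_n$ uniformly toward $g\boxempty f$.

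The main obstacle, and the only place convexity enters, is items 7 and 8. For item 7, I would show $(f\boxempty f)(x)=\inf_y\{f(x-y)+f(y)\}$ and argue that for convex $f$ the infimum is attained at the symmetric point $y=x/2$. The lower bound is the crux: by convexity, $f\bigl(\tfrac{(x-y)+y}{2}\bigr)\le\tfrac12 f(x-y)+\tfrac12 f(y)$, i.e. $f(x/2)\le\tfrac12\bigl(f(x-y)+f(y)\bigr)$ for every $y$, so $f(x-y)+f(y)\ge 2f(x/2)$; the matching upper bound comes from evaluating at $y=x/2$, giving exactly $2f(x/2)$. Item 8 is then a direct consequence: if $f(x)=g(2x)/2$ with $g$ convex, then $f$ is convex as well (composition of a convex function with the linear map $x\mapsto 2x$, scaled by a positive constant), so item 7 gives $(f\boxempty f)(x)=2f(x/2)=2\cdot\tfrac12 g(2\cdot x/2)=g(x)$. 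I expect the convexity argument in item 7 to be the heart of the proposition; everything else is formal manipulation of infima and extended-real arithmetic, and the care needed there is bookkeeping rather than genuine difficulty.
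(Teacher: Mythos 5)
Your proposal is correct and follows essentially the same route as the paper's own (commented-out) proof: items 1--5 by direct manipulation of the infima, item 6 by applying item 5 to $h=f_n-f$ and squeezing, item 7 by the midpoint convexity bound $f(x-y)+f(y)\ge 2f(x/2)$ matched at $y=x/2$, and item 8 by noting $f$ inherits convexity from $g$ and invoking item 7. The only cosmetic difference is that you spell out the change of variables in the associativity proof where the paper merely cites the interchange of iterated infima.
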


The next definition was introduced by B. Maurey in \cite{Mau}.\\

\begin{defi}[Property $(\tau)$]
\label{defi:2}
An ordered pair $(\mu,W)$, where $\mu$ is a probabilty measure on $\mathbb{R}^{n}$ and $W\colon\mathbb{R}^{n}\to[0;\infty]$ is a cost function satisfies property $(\tau)$ if for every bounded function $f$
\begin{align}
\label{eq:2}
\int_{\mathbb{R}^n}{e^{W\boxempty f}d\mu}\int_{\mathbb{R}^n}{e^{-f}d\mu}\leq1.
\end{align}
\end{defi}

The motivation for the Definition $\ref{defi:2}$ comes from the following Proposition from \cite{LW} which connects property $(\tau)$ with the concentration of measure phenomenon (see \cite{Led}).\\

\begin{prop}\label{concentration}
Assume that $(\mu, W)$ satisfies property $(\tau)$ then for every Borel set $A$
\begin{enumerate}
\item $\forall_{t>0}\quad\mu(A+B_{W}(t))\geq\frac{e^{t}\mu(A)}{(e^{t}-1)\mu(A)+1}\geq1-\mu(A)^{-1}e^{-t}$.
\item If $\mu(A)=\nu(-\infty;x]$, then $\forall_{t>0}\quad\mu(A+B_{W}(2t))\geq\nu(-\infty;x+t]$.
\end{enumerate}
\end{prop}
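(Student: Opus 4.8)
The plan is to derive both parts from one well-chosen test function in property $(\tau)$ (Definition \ref{defi:2}). Fix a Borel set $A$ with $0<\mu(A)<1$ and $t>0$, write $B=A+B_W(t)$, and take the \emph{bounded} test function $f=t\,\mathbb{I}_{A^c}$, that is $f=0$ on $A$ and $f=t$ on $A^c$. The factor $\int e^{-f}d\mu$ is immediate, equal to $\mu(A)+e^{-t}(1-\mu(A))$. For the other factor I would unwind the infimum convolution as
\[
(W\boxempty f)(x)=\min\bigl(d_W(x,A),\, d_W(x,A^c)+t\bigr),\qquad d_W(x,E):=\inf_{y\in E}W(x-y).
\]
Since $W(0)=0$, one has $d_W(x,A^c)=0$ for every $x\in A^c$, while $x\in B^c$ forces $W(x-a)>t$ for all $a\in A$, hence $d_W(x,A)\ge t$. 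Thus $W\boxempty f$ equals exactly $t$ on $B^c\subseteq A^c$ and is nonnegative on $B$, giving $\int e^{W\boxempty f}d\mu\ge \mu(B)+e^{t}(1-\mu(B))$.

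Feeding these two estimates into the inequality \eqref{eq:2} yields the scalar inequality
\[
\bigl(\mu(B)+e^{t}(1-\mu(B))\bigr)\bigl(\mu(A)+e^{-t}(1-\mu(A))\bigr)\le 1.
\]
Writing $p=\mu(A),\ q=\mu(B),\ u=e^{t}$, I would clear denominators and observe that the resulting quadratic in $u$ factors as $p(1-q)(u-1)\bigl(u-\tfrac{q(1-p)}{p(1-q)}\bigr)$; because $u>1$ and $p(1-q)>0$, the inequality forces $u\le \frac{q(1-p)}{p(1-q)}$, i.e.
\[
\frac{1-\mu(B)}{\mu(B)}\le e^{-t}\,\frac{1-\mu(A)}{\mu(A)}.
\]
A one-line rearrangement turns this into the claimed bound $\mu(B)\ge \frac{e^{t}\mu(A)}{(e^{t}-1)\mu(A)+1}$, and the weaker estimate follows from $1-\mu(B)\le\frac{1-\mu(A)}{(e^{t}-1)\mu(A)+1}\le \frac{1}{e^{t}\mu(A)}=\mu(A)^{-1}e^{-t}$; the cases $\mu(A)\in\{0,1\}$ are trivial. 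I expect the only real subtlety to be the choice of a \emph{finite} penalty $t$ off $A$: the naive choice $f=+\infty\cdot\mathbb{I}_{A^c}$ yields only $\bigl(\mu(B)+e^{t}(1-\mu(B))\bigr)\mu(A)\le 1$, which is strictly weaker, so improving the $\int e^{-f}$ factor to $\mu(A)+e^{-t}(1-\mu(A))$ is exactly what produces the sharp constant.

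For the second part I would transport the first part through the one-dimensional tail ratio of $\nu$. Applying part 1 with enlargement $2t$ gives $\phi(q)\le e^{-2t}\phi(p)$, where $\phi(s)=(1-s)/s$ is strictly decreasing, $q=\mu(A+B_W(2t))$ and $p=\mu(A)=\nu(-\infty;x]$. Setting $\psi(x)=\phi(\nu(-\infty;x])=\nu(x;\infty)/\nu(-\infty;x]$, the goal $q\ge\nu(-\infty;x+t]$ is, after applying the decreasing $\phi$, equivalent to $\phi(q)\le\psi(x+t)$; hence by the displayed bound it suffices to prove $\psi(x+t)\ge e^{-2t}\psi(x)$ for all $x\in\mathbb{R}$ and $t>0$. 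This reduces to checking that $h(x)=e^{2x}\psi(x)$ is nondecreasing, which I would verify from the explicit formulas $\psi(x)=2e^{-x}-1$ for $x\le 0$ and $\psi(x)=e^{-x}/(2-e^{-x})$ for $x\ge 0$: on each half-line $h'\ge 0$ by a short computation and $h$ matches continuously at $0$ with $h(0)=1$. The point to watch here is the direction of the monotonicity, namely that the factor $e^{2t}$ must dominate the decay of $\psi$, which is precisely why the enlargement has to be $2t$ rather than $t$.
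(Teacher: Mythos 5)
Your proof is correct. The paper itself states this proposition without proof (it is quoted from \cite{LW}), and your argument coincides with the standard one from that source: the two-valued test function $f=t\,\mathbb{I}_{A^c}$ in property $(\tau)$, combined with the factorization of the resulting quadratic in $e^{t}$, yields the sharp bound of part 1, and part 2 follows from part 1 (with enlargement $2t$) via the monotonicity of $x\mapsto e^{2x}\,\nu(x;\infty)/\nu(-\infty;x]$.
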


First three parts of the next Proposition are from \cite{Mau}, fourth part is a straightforward consequence of the second part, while the fifth part is a generealization of the result from \cite{Goz}.\\

\begin{prop}
\label{f:2}
Let $\mu, \mu_{1}, \mu_{2}$ be measures on $\mathbb{R}^n,\mathbb{R}^{n_{1}},\mathbb{R}^{n_{2}}$ and let $T\colon\mathbb{R}^{n}\to\mathbb{R}^{k}$. Assume that pairs $(\mu,W),(\mu_{1},W_{1}),(\mu_{2},W_{2})$ satisfy property $(\tau)$. Then
\begin{enumerate}

\item Pair $(\mu_{1}\otimes\mu_{2},W)$ satisfies property $(\tau)$, where $W(x_{1},x_{2})=W_{1}(x_{1})+W_{2}(x_{2})$.

\item If $V\colon\mathbb{R}^{k}\to[0;\infty]$ satisfies for all $x,y\in\mathbb{R}^{n}$ the condition 
\begin{align*}
V(T(x)-T(y))\leq W(x-y), 
\end{align*}
then pair $(T_{\#}\mu,V)$ satisfies property $(\tau)$.

\item If $n_{1}=n_{2}$, then pair $(\mu_{1}*\mu_{2},W_{1}\boxempty W_{2})$ satisfies property $(\tau)$. 

\item If $L\colon\mathbb{R}^n\to\mathbb{R}^k$ is an affine map ($L(x)=Ax+b$, where $A\in\mathbb{R}^{n\times k}$ and $b\in\mathbb{R}^k$) such that
\begin{align*}
V(Ax)\leq W(x), 
\end{align*}
then pair $(L_{\#}\mu,V)$ satisfies property $(\tau)$.

\item If $W(x)=\hat{W}(|x|)$ where $\hat{W}\colon[0;\infty)\to[0;\infty]$ is nondecreasing, then pair\\ 
$(T_{\#}\mu,(\hat{W}\circ\omega_{T})(|\cdot|))$ satisfies property $(\tau)$, where for $h\geq 0$  
\begin{align*}
\omega_{T}(h)=\inf\{|x-y|:|T(x)-T(y)|\geq h\},\quad\inf{\emptyset}=\infty.
\end{align*}

\end{enumerate}
\end{prop}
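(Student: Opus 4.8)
The plan is to prove the five parts in the order 1, 2, and then derive 3, 4, 5 as structural consequences. Part 1 (tensorization) carries the genuine analytic content; the remaining parts follow from part 2 (the contraction principle) combined with elementary inequalities for the operator $\boxempty$ from Proposition \ref{f:1}.

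For part 1 I would fix a bounded $f$ on $\mathbb{R}^{n_1}\times\mathbb{R}^{n_2}$ and first factor the infimum convolution against the additive cost $W(x_1,x_2)=W_1(x_1)+W_2(x_2)$. Writing the infimum over $(y_1,y_2)$ as an iterated infimum gives
\[
(W\boxempty f)(x_1,x_2)=\inf_{y_1}\Bigl\{W_1(x_1-y_1)+\bigl(W_2\boxempty f(y_1,\cdot)\bigr)(x_2)\Bigr\}.
\]
Since $\exp$ is increasing, the exponential of this infimum is the infimum of the exponentials, and pulling the $\mu_2$-integral through the infimum (using the elementary bound $\int\inf_{y_1}\le\inf_{y_1}\int$) yields, for each fixed $x_1$,
\[
\int e^{(W\boxempty f)(x_1,x_2)}\,d\mu_2(x_2)\le\inf_{y_1}\Bigl\{e^{W_1(x_1-y_1)}\int e^{(W_2\boxempty f(y_1,\cdot))(x_2)}\,d\mu_2(x_2)\Bigr\}.
\]
Applying property $(\tau)$ for $(\mu_2,W_2)$ to the bounded slice $f(y_1,\cdot)$ bounds the inner integral by $e^{g(y_1)}$, where $g(y_1):=-\log\int e^{-f(y_1,x_2)}\,d\mu_2(x_2)$; the right-hand side then collapses to $e^{(W_1\boxempty g)(x_1)}$. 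Integrating in $x_1$ and invoking property $(\tau)$ for $(\mu_1,W_1)$ applied to $g$ closes the argument, since $\int e^{-g}\,d\mu_1=\int e^{-f}\,d(\mu_1\otimes\mu_2)$ by Fubini.

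For part 2 I would use the transport formula $\int\varphi\,d(T_{\#}\mu)=\int\varphi\circ T\,d\mu$ to rewrite both factors of the target inequality as $\mu$-integrals of functions composed with $T$. The crux is the pointwise bound $(V\boxempty f)\circ T\le W\boxempty(f\circ T)$, which I obtain by restricting the infimum defining $(V\boxempty f)(T(x))$ to the points $z=T(y)$ and using the hypothesis $V(T(x)-T(y))\le W(x-y)$. Feeding this into the property $(\tau)$ inequality for $(\mu,W)$ with test function $f\circ T$ (bounded since $f$ is) gives the claim, because $\int e^{-f}\,d(T_{\#}\mu)=\int e^{-f\circ T}\,d\mu$.

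The remaining parts are corollaries of part 2. For part 4 I set $T=L$ and note $L(x)-L(y)=A(x-y)$, so $V(Ax)\le W(x)$ is exactly the contraction condition. For part 3 I realize $\mu_1*\mu_2$ as $S_{\#}(\mu_1\otimes\mu_2)$ for the sum map $S(x_1,x_2)=x_1+x_2$: part 1 gives $(\tau)$ for the product with the additive cost, and the inequality $(W_1\boxempty W_2)(u+v)\le W_1(u)+W_2(v)$ (take $w=v$ in the defining infimum) verifies the contraction hypothesis for $V=W_1\boxempty W_2$. For part 5 I take $V=(\hat W\circ\omega_T)(|\cdot|)$ and observe that for $h=|T(x)-T(y)|$ the pair $(x,y)$ is itself admissible in the infimum defining $\omega_T(h)$, whence $\omega_T(|T(x)-T(y)|)\le|x-y|$; monotonicity of $\hat W$ then gives $V(T(x)-T(y))\le W(x-y)$, and part 2 applies (the values of $z$ where $\omega_T=\infty$ never arise as $T(x)-T(y)$, so they cause no trouble). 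I expect the main obstacle to be part 1, specifically the careful justification of interchanging the infimum over $y_1$ with both the exponential and the $\mu_2$-integral, and checking that $g$ is bounded so that property $(\tau)$ for $\mu_1$ is applicable.
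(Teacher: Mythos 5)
Your proposal is correct and follows essentially the same route as the paper's own proof: tensorization via slicing, applying property $(\tau)$ to one factor, collapsing to an infimum convolution with $g(y)=-\ln\int e^{-f(y,\cdot)}\,d\mu$, applying $(\tau)$ to the other factor with Fubini, and then deriving parts 3--5 from the contraction principle of part 2 with the identical elementary inequalities. The only differences are cosmetic: you integrate out $\mu_2$ first where the paper integrates out $\mu_1$, and you phrase the key step as an exact iterated infimum followed by $\int\inf\le\inf\int$, whereas the paper fixes the test point $y'$, bounds, integrates, and takes the infimum afterwards --- the same estimate in either order.
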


\subsection{Transforms}

In this section we recall definitions and basic properties of Laplace, Legendre and Cramer transforms. These operators are used in convex analysis (see \cite{MagTik}) and in the theory of large deviations (see \cite{DeuStr}).\\ 

\begin{defi}[Laplace transform]
Laplace transform of a measure $\mu$ on $\mathbb{R}^n$ is
\begin{align*}
M_{\mu}(x)=\int_{\mathbb{R}^n}{e^{<x,y>}}d\mu(y).
\end{align*}
\end{defi}

\begin{prop}
For probability distributions $\mu,\mu_{1},\mu_{2}$ on $\mathbb{R}^{n},\mathbb{R}^{n_{1}},\mathbb{R}^{n_{2}}$ and $x\in\mathbb{R}^{n},x_{1}\in\mathbb{R}^{n_{1}},x_{2}\in\mathbb{R}^{n_{2}}$ one has

\begin{enumerate}
\item $M_{\mu_{1}\otimes\mu_{2}}(x_{1},x_{2})=M_{\mu_{1}}(x_{1})M_{\mu_{2}}(x_{2})$.
\item If $T(x)=Ax+b$, where $A\in\mathbb{R}^{m\times n}$, $b\in\mathbb{R}^m$ then for $y\in\mathbb{R}^m$ there is $M_{T_{\#}\mu}(y)=e^{\langle y,b\rangle}M_{\mu}(A^{t}y)$. 
\item If $n_{1}=n_{2}=n$, then $M_{\mu_{1}*\mu_{2}}(x)=M_{\mu_{1}}(x)M_{\mu_{2}}(x)$.
\end{enumerate}
\end{prop}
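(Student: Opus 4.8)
The plan is to derive all three identities directly from the defining integral $M_{\mu}(x)=\int_{\mathbb{R}^n}e^{\langle x,y\rangle}\,d\mu(y)$, establishing Part 1 and Part 2 from first principles and then obtaining Part 3 as a corollary.

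For Part 1, I would start from $M_{\mu_{1}\otimes\mu_{2}}(x_{1},x_{2})=\int_{\mathbb{R}^{n_{1}}\times\mathbb{R}^{n_{2}}}e^{\langle(x_{1},x_{2}),(y_{1},y_{2})\rangle}\,d(\mu_{1}\otimes\mu_{2})(y_{1},y_{2})$ and use the splitting of the scalar product on $\mathbb{R}^{n_{1}}\times\mathbb{R}^{n_{2}}$, namely $\langle(x_{1},x_{2}),(y_{1},y_{2})\rangle=\langle x_{1},y_{1}\rangle+\langle x_{2},y_{2}\rangle$, so that the integrand factors as $e^{\langle x_{1},y_{1}\rangle}e^{\langle x_{2},y_{2}\rangle}$. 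Since this integrand is nonnegative, Tonelli's theorem separates the integral over the product measure into the product of the two marginal integrals, yielding $M_{\mu_{1}}(x_{1})M_{\mu_{2}}(x_{2})$. The only point worth flagging is that $M_{\mu}$ may take the value $+\infty$; nonnegativity of the integrand is precisely what makes Tonelli applicable without any integrability hypothesis, so the identity holds in $[0,+\infty]$ regardless.

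For Part 2, the key tool is the transport change-of-variables formula $\int f\,d(T_{\#}\mu)=\int(f\circ T)\,d\mu$, valid for nonnegative measurable $f$ directly from the definition $T_{\#}\mu(A)=\mu(T^{-1}(A))$. Applying it with $f(z)=e^{\langle y,z\rangle}$ gives $M_{T_{\#}\mu}(y)=\int_{\mathbb{R}^n}e^{\langle y,Ax+b\rangle}\,d\mu(x)$. I then expand $\langle y,Ax+b\rangle=\langle y,b\rangle+\langle y,Ax\rangle$ and use the adjoint relation $\langle y,Ax\rangle=\langle A^{t}y,x\rangle$; pulling the constant factor $e^{\langle y,b\rangle}$ out of the integral leaves $e^{\langle y,b\rangle}\int_{\mathbb{R}^n}e^{\langle A^{t}y,x\rangle}\,d\mu(x)=e^{\langle y,b\rangle}M_{\mu}(A^{t}y)$.

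Finally, Part 3 is a corollary of the first two. Writing the convolution as a pushforward, $\mu_{1}*\mu_{2}=S_{\#}(\mu_{1}\otimes\mu_{2})$ with the linear map $S\colon\mathbb{R}^n\times\mathbb{R}^n\to\mathbb{R}^n$, $S(x_{1},x_{2})=x_{1}+x_{2}$ (so $S$ has matrix $[\,I_{n}\ I_{n}\,]$ and zero translation part), Part 2 gives $M_{\mu_{1}*\mu_{2}}(x)=M_{\mu_{1}\otimes\mu_{2}}(S^{t}x)$. Since $S^{t}x=(x,x)$, Part 1 evaluates this to $M_{\mu_{1}}(x)M_{\mu_{2}}(x)$. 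None of these steps presents a genuine obstacle---the statement is just the standard multiplicativity of Laplace transforms---so the only care needed is the $[0,+\infty]$-valued bookkeeping through Tonelli rather than Fubini, which dispenses with any a priori finiteness assumption on the transforms.
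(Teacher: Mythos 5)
Your proof is correct. The paper states this proposition without any proof, treating it as a standard property of the Laplace transform, so there is no argument of the paper to compare against; your route (Tonelli for the product measure, the pushforward change-of-variables formula for Part 2, and deducing Part 3 from the factorization $\mu_{1}*\mu_{2}=S_{\#}(\mu_{1}\otimes\mu_{2})$ with $S(x_{1},x_{2})=x_{1}+x_{2}$, an identity the paper itself invokes in the proof of Proposition \ref{fakt:6}) is the standard one, and your bookkeeping in $[0,+\infty]$ correctly dispenses with any finiteness assumptions.
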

\ \\
\begin{defi}[Legendre transform]
Legendre transform of a function $f\colon\mathbb{R}^n\to(-\infty;\infty]$ is
\begin{align*}
f^{\ast}(x)=\sup_{y}\{\langle x,y\rangle-f(y)\}.
\end{align*}
\end{defi}

\begin{prop}
For arbitrary $f,g\colon\mathbb{R}^{n}\to(-\infty;\infty]$

\begin{enumerate}
\item $f^{*}$ is convex.
\item $f^{\ast\ast}\leq f$.
\item If $f$ is convex and lower semicontinuous then $f^{\ast\ast}=f$.
\item If $f\leq g$, then $f^{\ast}\geq g^{\ast}$.
\item If $C$ is a real number then $(Cf)^{\ast}(x)=Cf^{\ast}(x/C)$ and $(f(\cdot/C))^{\ast}(x)=f^{\ast}(Cx)$.
\item If $f,g$ are convex then $(f\boxempty g)^{\ast}=f^{\ast}+g^{\ast}$.
\end{enumerate}
\end{prop}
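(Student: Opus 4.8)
The plan is to derive the five elementary items directly from the definition of the Legendre transform and to isolate the single substantial point, the Fenchel--Moreau identity of item 3, for a separation argument.

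For item 1, I would note that for each fixed $y$ the map $x\mapsto\langle x,y\rangle-f(y)$ is affine, hence convex, and $f^{*}$ is the pointwise supremum of this family over $y$; a supremum of convex functions is convex. Item 4 is immediate: if $f\leq g$ then $\langle x,y\rangle-f(y)\geq\langle x,y\rangle-g(y)$ for every $y$, and taking the supremum over $y$ preserves the inequality, giving $f^{*}\geq g^{*}$. For item 2 I would apply the definition of $f^{*}$ with the particular choice $y=x$, which yields $f^{*}(z)\geq\langle z,x\rangle-f(x)$ for all $z$, hence $\langle x,z\rangle-f^{*}(z)\leq f(x)$; taking the supremum over $z$ gives $f^{**}(x)\leq f(x)$.

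Item 5 I would handle by change of variables in the defining supremum. For the first formula (which requires $C>0$ so that factoring out $C$ is order-preserving) I would write $(Cf)^{*}(x)=\sup_{y}\{\langle x,y\rangle-Cf(y)\}=C\sup_{y}\{\langle x/C,y\rangle-f(y)\}=Cf^{*}(x/C)$; for the second I would substitute $z=y/C$ (a bijection for any $C\neq 0$) and use $\langle x,Cz\rangle=\langle Cx,z\rangle$ to get $f^{*}(Cx)$. Item 6 is the statement that infimal convolution is dual to addition; I would expand $(f\boxempty g)^{*}(x)=\sup_{z}\{\langle x,z\rangle-\inf_{w}[f(z-w)+g(w)]\}=\sup_{z,w}\{\langle x,z\rangle-f(z-w)-g(w)\}$, then set $u=z-w$ so that the double supremum factorizes as $\sup_{u}\{\langle x,u\rangle-f(u)\}+\sup_{w}\{\langle x,w\rangle-g(w)\}=f^{*}(x)+g^{*}(x)$. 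I would remark that the convexity hypothesis is not actually needed for this direction, the sup-interchange being valid in general.

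The one genuinely nontrivial step is item 3, and this is where I expect the main obstacle. By item 2 it suffices to prove the reverse inequality $f^{**}\geq f$ when $f$ is convex and lower semicontinuous, and I would argue by contradiction. If $f^{**}(x_{0})<f(x_{0})$ at some $x_{0}$, then the point $(x_{0},f^{**}(x_{0}))$ lies strictly below the epigraph of $f$, which is a closed convex subset of $\mathbb{R}^{n+1}$ by convexity and lower semicontinuity; the Hahn--Banach separation theorem then produces an affine minorant $\ell(x)=\langle a,x\rangle-b$ of $f$ with $\ell(x_{0})>f^{**}(x_{0})$. Since $\ell\leq f$ forces $f^{*}(a)\leq b$, I can evaluate $f^{**}(x_{0})\geq\langle a,x_{0}\rangle-f^{*}(a)\geq\langle a,x_{0}\rangle-b=\ell(x_{0})$, contradicting $\ell(x_{0})>f^{**}(x_{0})$. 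The delicate point within this step is the case of a vertical separating hyperplane, which arises when $x_{0}$ lies outside the effective domain of $f$; there one must combine the separating functional with a genuine affine minorant (available whenever $f$ is proper) to obtain a nonvertical one, and handle separately the degenerate case $f\equiv+\infty$.
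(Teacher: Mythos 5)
Your proof is correct, but there is nothing in the paper to compare it against: the paper states this proposition without proof, as a list of standard facts about the Legendre transform with the reader implicitly referred to the convex analysis literature (\cite{MagTik}). Your arguments are the standard textbook ones and are sound. Items 1, 2 and 4 follow exactly as you say from the definition as a supremum of affine functions and from Young's inequality; your change-of-variables computations for items 5 and 6 are correct, and you are right that item 6 needs no convexity hypothesis --- the identity $(f\boxempty g)^{\ast}=f^{\ast}+g^{\ast}$ holds for arbitrary functions, since $\sup_{z,w}$ splits after the substitution $u=z-w$. Your side remark on item 5 in fact sharpens the statement as printed: the first identity requires $C>0$ (for $C<0$ factoring $C$ out of the supremum reverses it, and for $C=0$ the right-hand side is meaningless), a restriction the paper leaves implicit. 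For item 3 (Fenchel--Moreau), your separation-by-contradiction argument is the standard proof, and you correctly isolate the only delicate point: the separating hyperplane through $(x_{0},f^{\ast\ast}(x_{0}))$ may be vertical when $x_{0}\notin\mathrm{dom}\,f$, which is repaired by adding a large positive multiple of the vertical functional to a genuine affine minorant of $f$ (such a minorant exists whenever $f$ is proper), with the degenerate case $f\equiv+\infty$ checked directly since then $f^{\ast}\equiv-\infty$ and $f^{\ast\ast}\equiv+\infty$. I see no gaps.
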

\ \\
\begin{defi}[Cramer transform]
Cramer transform of a measure $\mu$ is $\Lambda^{\ast}_{\mu}$, where $\Lambda_{\mu}=\ln M_{\mu}$.
\end{defi}
\ \\
\begin{prop}
For probability distributions $\mu,\mu_{1},\mu_{2}$ on $\mathbb{R}^{n},\mathbb{R}^{n_{1}},\mathbb{R}^{n_{2}}$ and $x\in\mathbb{R}^{n},x_{1}\in\mathbb{R}^{n_{1}},x_{2}\in\mathbb{R}^{n_{2}}$ one has

\begin{enumerate}
\item $\Lambda_{\mu_{1}\otimes\mu_{2}}(x_{1},x_{2})=\Lambda_{\mu_{1}}(x_{1})+\Lambda_{\mu_{2}}(x_{2})$.
\item $\Lambda_{\mu_{1}\otimes\mu_{2}}^{*}(x_{1},x_{2})=\Lambda_{\mu_{1}}^{*}(x_{1})+\Lambda_{\mu_{2}}^{*}(x_{2})$
\item If $T(x)=Ax+b$, where $A\in\mathbb{R}^{m\times n}$, $b\in\mathbb{R}^m$, then $\Lambda_{T_{\#}\mu}(y)=\langle y,b\rangle+\Lambda_{\mu}(A^{t}y)$ for $y\in\mathbb{R}^m$.
\item If $n_{1}=n_{2}=n$, then $\Lambda_{\mu_{1}*\mu_{2}}(x)=\Lambda_{\mu_{1}}(x)+\Lambda_{\mu_{2}}(x)$.
\item $\Lambda_{\mu}$ is convex.
\item $\Lambda^{\ast}_{\mu}$ is convex and nonnegative.
\item $\Lambda^{\ast}_{\mu}(0)=0$.
\item If $\mu$ is a symmetric probability measure, then $\Lambda^{*}_{\mu}$ is even and $\Lambda^{*}_{\overline{\mu}}(x)=2\Lambda^{*}_{\mu}(x/2)$.
\end{enumerate}
\end{prop}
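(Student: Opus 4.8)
The plan is to read off each of the eight assertions from the properties of the Laplace transform $M_\mu$ and the Legendre transform $f^\ast$ already recorded in the two preceding propositions, exploiting the two defining relations $\Lambda_\mu=\ln M_\mu$ and $\Lambda^\ast_\mu=(\Lambda_\mu)^\ast$. The three multiplicative identities, Parts 1, 3 and 4, follow at once by taking logarithms of the corresponding Laplace identities: from $M_{\mu_1\otimes\mu_2}(x_1,x_2)=M_{\mu_1}(x_1)M_{\mu_2}(x_2)$ and $M_{\mu_1\ast\mu_2}(x)=M_{\mu_1}(x)M_{\mu_2}(x)$ one gets Parts 1 and 4, and from $M_{T_\#\mu}(y)=e^{\langle y,b\rangle}M_\mu(A^t y)$ one gets the affine formula of Part 3. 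For Part 2 I would substitute Part 1 into the definition of the Legendre transform and note that the objective decouples in the two blocks of variables, so that $\Lambda^\ast_{\mu_1\otimes\mu_2}(x_1,x_2)=\sup_{y_1,y_2}\{\langle x_1,y_1\rangle+\langle x_2,y_2\rangle-\Lambda_{\mu_1}(y_1)-\Lambda_{\mu_2}(y_2)\}$ splits into the sum of the two independent suprema $\Lambda^\ast_{\mu_1}(x_1)+\Lambda^\ast_{\mu_2}(x_2)$.

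For the convexity/positivity statements, Part 5 (convexity of $\Lambda_\mu$) I would obtain from H\"older's inequality: writing $e^{\langle\lambda x_1+(1-\lambda)x_2,y\rangle}=\bigl(e^{\langle x_1,y\rangle}\bigr)^{\lambda}\bigl(e^{\langle x_2,y\rangle}\bigr)^{1-\lambda}$ and applying H\"older with exponents $1/\lambda$ and $1/(1-\lambda)$ gives $M_\mu(\lambda x_1+(1-\lambda)x_2)\le M_\mu(x_1)^{\lambda}M_\mu(x_2)^{1-\lambda}$, which is exactly log-convexity of $M_\mu$. Part 6 then splits in two: convexity of $\Lambda^\ast_\mu$ is free, since any Legendre transform is convex (Legendre proposition, Part 1); and nonnegativity follows because $M_\mu(0)=\mu(\mathbb{R}^n)=1$ forces $\Lambda_\mu(0)=0$, whence $\Lambda^\ast_\mu(x)\ge\langle x,0\rangle-\Lambda_\mu(0)=0$.

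Part 7 is the step that needs care and is the main obstacle. Evaluating at the origin gives $\Lambda^\ast_\mu(0)=\sup_y(-\Lambda_\mu(y))=-\inf_y\Lambda_\mu(y)$, which is $\ge 0$ by Part 6; the reverse inequality requires $\Lambda_\mu\ge 0$, i.e. $M_\mu(y)\ge 1$ for every $y$. This is where Jensen's inequality enters, $M_\mu(y)=\int e^{\langle y,x\rangle}\,d\mu\ge e^{\langle y,\int x\,d\mu\rangle}$, so the identity $\Lambda^\ast_\mu(0)=0$ holds exactly when $\mu$ is centered, $\int x\,d\mu=0$; in general the unique zero of $\Lambda^\ast_\mu$ sits at the barycenter of $\mu$. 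This covers the symmetric measures that are the focus of the paper, but some centering hypothesis is genuinely needed here (for the one-sided $\nu_+$, for instance, one computes $\Lambda^\ast_{\nu_+}(0)=\infty$), and I would flag the point explicitly rather than treat it as automatic.

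Finally, Part 8 combines the earlier items with symmetry. Since $\mu$ symmetric means $\mu'=\mu$ and $M_{\mu'}(x)=M_\mu(-x)$, we get $\Lambda_\mu(-x)=\Lambda_\mu(x)$; replacing $y$ by $-y$ in the supremum defining $\Lambda^\ast_\mu$ then yields $\Lambda^\ast_\mu(-x)=\Lambda^\ast_\mu(x)$, so $\Lambda^\ast_\mu$ is even. For the scaling relation, $\overline{\mu}=\mu\ast\mu'$ together with Part 4 and evenness gives $\Lambda_{\overline{\mu}}=\Lambda_\mu+\Lambda_{\mu'}=2\Lambda_\mu$, and applying the Legendre scaling rule $(Cf)^\ast(x)=Cf^\ast(x/C)$ with $C=2$ produces $\Lambda^\ast_{\overline{\mu}}(x)=(2\Lambda_\mu)^\ast(x)=2\Lambda^\ast_\mu(x/2)$, which is the claim.
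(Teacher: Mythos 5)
Your proof is correct, and there is in fact nothing in the paper to compare it against: the proposition is stated as a collection of standard preliminary facts, with no proof given (not even a commented-out one, unlike some neighbouring propositions). Your route --- taking logarithms of the Laplace-transform identities for Parts 1, 3, 4, decoupling the supremum over $(y_1,y_2)$ for Part 2, H\"older's inequality for the log-convexity in Part 5, the trivial bound $\Lambda^{\ast}_{\mu}(x)\geq\langle x,0\rangle-\Lambda_{\mu}(0)=0$ for Part 6, and $\Lambda_{\overline{\mu}}=2\Lambda_{\mu}$ combined with the Legendre scaling rule for Part 8 --- is the standard argument and clearly the one the paper implicitly relies on.

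The most valuable part of your write-up is the caveat on Part 7, and you are right to flag it: as stated, for an arbitrary probability measure, $\Lambda^{\ast}_{\mu}(0)=0$ is \emph{false}. Indeed $\Lambda^{\ast}_{\mu}(0)=-\inf_{y}\Lambda_{\mu}(y)$, and for the one-sided exponential one has $\Lambda_{\nu_{+}}(y)=-\ln(1-y)$ for $y<1$, so $\Lambda^{\ast}_{\nu_{+}}(0)=\sup_{y<1}\ln(1-y)=+\infty$, exactly as you computed. The precise condition is $\Lambda^{\ast}_{\mu}(0)=0$ if and only if $M_{\mu}\geq1$ everywhere (equivalently $\inf\Lambda_{\mu}=0$); centering $\int x\,d\mu=0$ is sufficient for this by Jensen, but your phrase ``holds exactly when $\mu$ is centered'' is slightly too strong --- a Cauchy measure has no barycenter, yet $\Lambda_{\mu}\equiv+\infty$ off the origin forces $\Lambda^{\ast}_{\mu}\equiv0$. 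This erratum in the paper's statement is harmless for its applications: Part 7 is only ever invoked for symmetric measures ($\overline{\mu}$ in the IC definition, $\nu$ in the proof of Proposition \ref{tw:8}), where nonnegativity of $\Lambda_{\mu}$ follows from Jensen exactly as you argue.
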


\subsection{Infimum convolution inequality - IC}

The next Proposition which was proved in \cite{LW} gives an upper bound for any convex cost function from the Definition \ref{defi:2}.\\

\begin{prop}
If a pair $(\mu,W)$ satisfies property $(\tau)$ and $W$ is convex, then $W\leq\Lambda^{\ast}_{\overline{\mu}}$.
\end{prop}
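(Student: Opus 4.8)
The plan is to dualize: rather than bounding $W$ directly, I would first prove the equivalent inequality $\Lambda_{\overline{\mu}}\le W^{*}$ between the log-Laplace transform of $\overline{\mu}$ and the Legendre transform of $W$, and then pass back by Legendre duality. Indeed, once $\Lambda_{\overline{\mu}}\le W^{*}$ is in hand, the order-reversing property of the Legendre transform gives $(\Lambda_{\overline{\mu}})^{*}\ge (W^{*})^{*}=W^{**}$, and since $W$ is convex (and lower semicontinuous, so that $W^{**}=W$) the left-hand side is exactly $\Lambda^{*}_{\overline{\mu}}$ and the right-hand side is $W$. Thus the entire content of the statement is the single inequality $\Lambda_{\overline{\mu}}\le W^{*}$.

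To extract that inequality I would feed the affine test functions $f_{s}(y)=-\langle s,y\rangle$, for $s\in\mathbb{R}^{n}$, into property $(\tau)$. The point is that the infimum convolution of a convex $W$ with a linear function collapses to a linear function: writing $w=x-y$,
\begin{align*}
(W\boxempty f_{s})(x)=\inf_{y}\{W(x-y)-\langle s,y\rangle\}=-\langle s,x\rangle+\inf_{w}\{W(w)+\langle s,w\rangle\}=-\langle s,x\rangle-W^{*}(-s).
\end{align*}
Consequently $\int e^{W\boxempty f_{s}}\,d\mu=e^{-W^{*}(-s)}M_{\mu}(-s)$ and $\int e^{-f_{s}}\,d\mu=M_{\mu}(s)$, so property $(\tau)$ reads $e^{-W^{*}(-s)}M_{\mu}(-s)M_{\mu}(s)\le 1$. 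Taking logarithms and using $\Lambda_{\overline{\mu}}(s)=\Lambda_{\mu}(s)+\Lambda_{\mu}(-s)$ (because $M_{\overline{\mu}}(s)=M_{\mu}(s)M_{\mu}(-s)$) gives $\Lambda_{\overline{\mu}}(s)\le W^{*}(-s)$ for every $s$. Finally, since $\overline{\mu}$ is symmetric its Laplace transform is even, hence $\Lambda_{\overline{\mu}}$ is even; substituting $s\mapsto -s$ and using $\Lambda_{\overline{\mu}}(-s)=\Lambda_{\overline{\mu}}(s)$ upgrades this to $\Lambda_{\overline{\mu}}(s)\le W^{*}(s)$ for all $s$, which is precisely what the first paragraph requires.

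The one genuinely technical point — and the step I expect to need the most care — is that the test functions $f_{s}$ are unbounded, whereas Definition \ref{defi:2} only supplies the inequality for bounded $f$. I would resolve this by truncation: apply property $(\tau)$ to the functions $f_{s,R}=\max(f_{s},-R)$ and let $R\to\infty$. This choice is convenient because $f_{s,R}\ge f_{s}$, so monotonicity of the infimum convolution gives $W\boxempty f_{s,R}\ge W\boxempty f_{s}=-\langle s,\cdot\rangle-W^{*}(-s)$ pointwise, whence $\int e^{W\boxempty f_{s,R}}\,d\mu\ge e^{-W^{*}(-s)}M_{\mu}(-s)$ for every $R$; meanwhile $-f_{s,R}=\min(\langle s,\cdot\rangle,R)\nearrow\langle s,\cdot\rangle$, so monotone convergence yields $\int e^{-f_{s,R}}\,d\mu\nearrow M_{\mu}(s)$. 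Combining these in $\int e^{W\boxempty f_{s,R}}\,d\mu\,\int e^{-f_{s,R}}\,d\mu\le 1$ and passing to the limit recovers $e^{-W^{*}(-s)}M_{\mu}(-s)M_{\mu}(s)\le 1$ exactly; alternatively one may simply invoke the standard extension of property $(\tau)$ to arbitrary (not necessarily bounded) test functions. The case where a moment is infinite causes no trouble, since then $\Lambda_{\overline{\mu}}(s)=\infty$ is forced to coincide with $W^{*}(\pm s)=\infty$.
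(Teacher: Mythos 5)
Your overall route --- feeding linear test functions into property $(\tau)$ to obtain the dual inequality $\Lambda_{\overline{\mu}}\le W^{*}$ and then passing back by Legendre duality --- is the standard proof of this statement (the paper itself gives no proof, deferring to \cite{LW}), and your computation $(W\boxempty f_{s})(x)=-\langle s,x\rangle-W^{*}(-s)$, the identity $\Lambda_{\overline{\mu}}(s)=\Lambda_{\mu}(s)+\Lambda_{\mu}(-s)$ and the use of evenness of $\Lambda_{\overline{\mu}}$ are all correct. However, two steps do not hold as written. The first is your truncation: $f_{s,R}=\max(f_{s},-R)$ is bounded below but still unbounded \emph{above} whenever $s\neq0$, so Definition \ref{defi:2}, which you may only invoke for bounded $f$, applies to $f_{s,R}$ no more than to $f_{s}$ itself. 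The repair is short but must actually be carried out: extend \eqref{eq:2} to all $f$ bounded below by applying it to the bounded functions $f\wedge n$, using $e^{-(f\wedge n)}\ge e^{-f}$ and $W\boxempty(f\wedge n)\ge(W\boxempty f)\wedge n$ (valid since $W\ge 0$), and letting $n\to\infty$ by monotone convergence; only then is your one-sided truncation legitimate. As written, the phrase ``invoke the standard extension'' is carrying the whole weight of the only delicate point.

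The second gap is more substantive: your final step needs $W^{**}=W$, i.e.\ lower semicontinuity of $W$, which you insert parenthetically but which the Proposition does not assume (note that the paper's own list of Legendre-transform properties grants $f^{**}=f$ only for $f$ convex \emph{and} lower semicontinuous). A convex $W$ can fail to be lsc on the relative boundary of $\mathrm{dom}\,W$, and at such points your argument yields only $\mathrm{cl}\,W\le\Lambda^{*}_{\overline{\mu}}$, which does not bound $W$ itself. The discrepancy is not vacuous: take $\mu=\frac{1}{2}(\delta_{-1}+\delta_{1})$, so that $\Lambda^{*}_{\overline{\mu}}(\pm2)=\ln4<\infty$, and let $W=\Lambda^{*}_{\overline{\mu}}$ on $(-2,2)$, $W(\pm2)=\ln5$, $W=\infty$ off $[-2,2]$. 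This $W$ is convex and satisfies your dual inequality $\Lambda_{\overline{\mu}}\le W^{*}$ (indeed $W^{*}=(\mathrm{cl}\,W)^{*}=\Lambda_{\overline{\mu}}$), yet $W(2)>\Lambda^{*}_{\overline{\mu}}(2)$; so the implication ``$\Lambda_{\overline{\mu}}\le W^{*}\Rightarrow W\le\Lambda^{*}_{\overline{\mu}}$'' is simply false for convex non-lsc $W$, and the Proposition for this $W$ (i.e.\ that $(\mu,W)$ cannot have property $(\tau)$) needs a different argument. One that works here is the bounded test function $f_{R}=R\,\mathbb{I}_{\{y\neq-1\}}$, for which \eqref{eq:2} gives $e^{\min(W(2),R)}\,\mu\{1\}\mu\{-1\}\le1$, hence $W(2)\le\ln4$ after $R\to\infty$. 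So either strengthen the hypothesis to ``convex and lower semicontinuous'' (harmless for this paper, since the cost functions it uses are Cramer transforms, which are lsc), or supplement the duality argument with a separate treatment of the relative boundary of $\mathrm{dom}\,W$.
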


It motivates the following definition\\

\begin{defi}[Infimum convolution inequality - IC]
A probability measure $\mu$ on $\mathbb{R}^{n}$ satisfies the infimum convolution inequality with constant $\beta>0$ if the pair $(\mu,\Lambda^{\ast}_{\overline{\mu}}(\cdot/\beta))$ satisfies property $(\tau)$.
\end{defi}

In the next Proposition we collect properties of the infimum convolution inequalities\\  

\begin{prop}
\
\label{fakt:6}
For any probability measures $\mu,\mu_{1},\mu_{2}$ on $\mathbb{R}^{n},\mathbb{R}^{n_{1}},\mathbb{R}^{n_{2}}$ satisfying $\rm{IC}(\beta), \rm{IC}(\beta_{1}), \rm{IC}(\beta_{2})$ there holds
\begin{enumerate}
\item If $L$ is an affine map, then $L_{\#}\mu$ satisfies $\rm{IC}(\beta)$.
\item The product $\mu_{1}\otimes\mu_{2}$ satisfies $\rm{IC}(\beta_{1}\vee\beta_{2})$. 
\item
If $n_{1}=n_{2}$, then convolution $\mu_{1}*\mu_{2}$ satisfies $\rm{IC}(\beta_{1}\vee\beta_{2})$.
\item Symmetrization $\overline{\mu}$ satisfies $\rm{IC}(\beta)$.
\end{enumerate}
\end{prop}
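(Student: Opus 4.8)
The plan is to derive each of the four parts from the structural facts already
established for property $(\tau)$ in Proposition \ref{f:2}, combined with the
transformation rules for the Cramer transform. The key point throughout is that
$\mathrm{IC}(\beta)$ for $\mu$ means precisely that the pair
$(\mu,\Lambda^{\ast}_{\overline{\mu}}(\cdot/\beta))$ satisfies $(\tau)$, so each
claim reduces to controlling how the cost function
$\Lambda^{\ast}_{\overline{\mu}}(\cdot/\beta)$ behaves under affine maps, products
and convolutions, and then invoking the corresponding part of Proposition
\ref{f:2}.

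\emph{Part 1.} I would first compute how $\Lambda^{\ast}_{\overline{L_{\#}\mu}}$
relates to $\Lambda^{\ast}_{\overline{\mu}}$. Using the transport rule
$\Lambda_{T_{\#}\mu}(y)=\langle y,b\rangle+\Lambda_{\mu}(A^{t}y)$ together with the
fact that symmetrization kills the translation part (so that
$\overline{L_{\#}\mu}=\widetilde{L}_{\#}\overline{\mu}$ for the linear part
$\widetilde L(x)=Ax$), one obtains an identity of the form
$\Lambda^{\ast}_{\overline{L_{\#}\mu}}(Ax)\le\Lambda^{\ast}_{\overline{\mu}}(x)$,
so that the cost functions satisfy exactly the hypothesis $V(Ax)\le W(x)$ of
Proposition \ref{f:2} Part 4 (with $V=\Lambda^{\ast}_{\overline{L_{\#}\mu}}(\cdot/\beta)$ and
$W=\Lambda^{\ast}_{\overline{\mu}}(\cdot/\beta)$, the scaling by $\beta$ being
harmless). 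The main subtlety is handling the case where $A$ is not surjective,
where one must argue that $\Lambda^{\ast}_{\overline{L_{\#}\mu}}$ is $+\infty$
off the range of $A$; for this I would use $\Lambda^{\ast\ast}=\Lambda$ for convex
lower semicontinuous $\Lambda$.

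\emph{Parts 2 and 3.} For the product, I would use the tensorization rule
$\Lambda^{\ast}_{\mu_{1}\otimes\mu_{2}}(x_{1},x_{2})
=\Lambda^{\ast}_{\mu_{1}}(x_{1})+\Lambda^{\ast}_{\mu_{2}}(x_{2})$ (which also holds
for the symmetrizations, since $\overline{\mu_{1}\otimes\mu_{2}}
=\overline{\mu_{1}}\otimes\overline{\mu_{2}}$) to split the cost. Replacing each
$\beta_{i}$ by the common value $\beta_{1}\vee\beta_{2}$ only decreases each
summand, since $\Lambda^{\ast}_{\overline{\mu_{i}}}(\cdot/\beta_{i})
\ge\Lambda^{\ast}_{\overline{\mu_{i}}}(\cdot/(\beta_{1}\vee\beta_{2}))$ by
monotonicity of $\Lambda^{\ast}$ along rays; hence each factor still satisfies
$(\tau)$ with the enlarged constant, and Proposition \ref{f:2} Part 1 yields the
claim. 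For the convolution I would combine this with Proposition \ref{f:2} Part 3,
using that $\overline{\mu_{1}*\mu_{2}}=\overline{\mu_{1}}*\overline{\mu_{2}}$ and
the additivity $\Lambda_{\mu_{1}*\mu_{2}}=\Lambda_{\mu_{1}}+\Lambda_{\mu_{2}}$,
which on the Legendre side gives an infimum-convolution:
$\Lambda^{\ast}_{\mu_{1}*\mu_{2}}
=\Lambda^{\ast}_{\mu_{1}}\boxempty\Lambda^{\ast}_{\mu_{2}}$. The resulting cost
$\Lambda^{\ast}_{\overline{\mu_{1}}}(\cdot/\beta)\boxempty
\Lambda^{\ast}_{\overline{\mu_{2}}}(\cdot/\beta)$ is exactly the one produced by
Part 3 after equalizing the constants as above.

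\emph{Part 4.} Here I would invoke the identity
$\Lambda^{\ast}_{\overline{\mu}}(x)=2\Lambda^{\ast}_{\mu}(x/2)$ for symmetric $\mu$
from the Cramer-transform proposition, but more to the point the fact that
$\overline{\mu}=\mu*\mu'$ is itself a convolution; since $\mu'$ is an affine
(indeed linear) image of $\mu$, Part 1 shows $\mu'$ satisfies $\mathrm{IC}(\beta)$,
and then Part 3 applied to $\mu*\mu'$ gives $\mathrm{IC}(\beta\vee\beta)=\mathrm{IC}(\beta)$
for $\overline{\mu}$. I expect the main obstacle across all four parts to be the
bookkeeping in Part 1: verifying the cost-domination inequality
$\Lambda^{\ast}_{\overline{L_{\#}\mu}}(Ax)\le\Lambda^{\ast}_{\overline{\mu}}(x)$
cleanly when $A$ is neither injective nor surjective, and confirming that the
symmetrization $\overline{\cdot}$ commutes with each operation, since the whole
scheme rests on reducing $\mathrm{IC}$ statements to the already-proven property
$(\tau)$ lemmas.
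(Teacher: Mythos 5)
Your proposal is correct, and Parts 1 and 2 are essentially the paper's own argument: the paper likewise proves $\overline{L_{\#}\mu}=A_{\#}\overline{\mu}$ (symmetrization killing the translation) and then the one-line estimate $\Lambda^{*}_{A_{\#}\overline{\mu}}(Ax)=\sup_{y}\{\langle x,A^{t}y\rangle-\Lambda_{\overline{\mu}}(A^{t}y)\}\leq\Lambda^{*}_{\overline{\mu}}(x)$, and it tensorizes and equalizes the constants exactly as you do. Two remarks. First, your anticipated obstacle in Part 1 (non-surjective $A$) is not an obstacle at all: the hypothesis of Proposition \ref{f:2} Part 4 is $V(Ax)\leq W(x)$ for all $x$, which never evaluates $V$ off the range of $A$, so no argument about $\Lambda^{*}_{\overline{L_{\#}\mu}}$ being $+\infty$ there is needed; the supremum computation above already handles arbitrary, possibly non-injective and non-surjective, $A$. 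Second, in Parts 3 and 4 you take a genuinely different and somewhat heavier route: the paper simply writes $\mu_{1}*\mu_{2}=T_{\#}(\mu_{1}\otimes\mu_{2})$ with $T(x_{1},x_{2})=x_{1}+x_{2}$ and $\overline{\mu}=S_{\#}(\mu\otimes\mu)$ with $S(x_{1},x_{2})=x_{1}-x_{2}$, so both parts become immediate corollaries of Parts 1 and 2 with no further analysis of Cramer transforms. Your route through Proposition \ref{f:2} Part 3 also works, but be careful with the claimed identity $\Lambda^{*}_{\mu_{1}*\mu_{2}}=\Lambda^{*}_{\mu_{1}}\boxempty\Lambda^{*}_{\mu_{2}}$: in general one only has the inequality $(\Lambda_{\mu_{1}}+\Lambda_{\mu_{2}})^{*}\leq\Lambda^{*}_{\mu_{1}}\boxempty\Lambda^{*}_{\mu_{2}}$, since an infimum convolution of conjugates need not be lower semicontinuous and may strictly exceed its biconjugate. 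Fortunately this inequality, which follows directly by splitting $\langle x,y\rangle=\langle u,y\rangle+\langle v,y\rangle$ for any decomposition $x=u+v$, is exactly the direction you need (the IC cost must be dominated by the cost that Proposition \ref{f:2} produces), so your scheme is sound; the paper's reduction of convolution and symmetrization to linear push-forwards of products is simply shorter and avoids these convex-duality technicalities.
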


\begin{proof}
\
\begin{enumerate}
\item Denote $L(x)=Ax+b$. Thanks to part 4 of Proposition \ref{f:2} it is enough to check that
\begin{align*}
\Lambda^{*}_{\overline{L_{\#}\mu}}\Big(\frac{Ax}{\beta}\Big)\leq\Lambda^{*}_{\overline{\mu}}\Big(\frac{x}{\beta}\Big).
\end{align*}
We will show that
\begin{align*}
\overline{L_{\#}\mu}=A_{\#}\overline{\mu}.
\end{align*}
Indeed if $X_{1},X_{2}$ are independent random variables with distribution $\mu$ then $A(X_{1}-X_{2})$ has distribution $A_{\#}\overline{\mu}$ while $L(X_{1})-L(X_{2})$ has distribution $\overline{L_{\#}\mu}$ and the desired equality follows from $A(X_{1}-X_{2})=L(X_{1})-L(X_{2})$. Thus
\begin{align*}
\Lambda^{*}_{\overline{L_{\#}\mu}}(Ax)&=
\Lambda^{*}_{A_{\#}\overline{\mu}}(Ax)
=\sup_{y}\Big\{\langle Ax,y\rangle-\ln\int{e^{\langle y,Az
\rangle}d\overline{\mu}(z)}\Big\}\\
&=\sup_{y}\Big\{\langle x,A^{t}y\rangle-\ln\int{e^{\langle A^{t}y,z \rangle}d\overline{\mu}(z)}\Big\}\leq\sup_{y}\Big\{\langle x,y\rangle-\ln\int{e^{\langle y,z \rangle}d\overline{\mu}(z)}\Big\}
\\
&=\Lambda^{*}_{\overline{\mu}}(x),
\end{align*}
hence
\begin{align*}
\Lambda^{*}_{\overline{L_{\#}\mu}}\Big(\frac{Ax}{\beta}\Big)=\Lambda^{*}_{\overline{L_{\#}\mu}}\Big(A\Big(\frac{x}{\beta}\Big)\Big)\leq\Lambda^{*}_{\overline{\mu}}\Big(\frac{x}{\beta}\Big).
\end{align*}

\item Since for $i=1,2$ pair $(\mu_{i},\Lambda^{*}_{\overline{\mu_{i}}}(\cdot/\beta_{i}))$ has $(\tau)$ property, so using Proposition \ref{f:2} the pair $(\mu_{1}\otimes\mu_{2},W)$ has property $(\tau)$, where 
\begin{align*}
W(x_{1},x_{2})=\Lambda^{*}_{\overline{\mu_{1}}}\Big(\frac{x_{1}}{\beta_{1}}\Big)+\Lambda^{*}_{\overline{\mu_{2}}}\Big(\frac{x_{2}}{\beta_{2}}\Big).
\end{align*}
The claim follows from
\begin{align*}
\Lambda^{*}_{\overline{\mu_{1}\otimes\mu_{2}}}\Bigg(\frac{(x_{1},x_{2})}{\beta_{1}\vee\beta_{2}}\Bigg)=\Lambda^{*}_{\overline{\mu_{1}}\otimes\overline{\mu_{2}}}\Bigg(\frac{(x_{1},x_{2})}{\beta_{1}\vee\beta_{2}}\Bigg)\leq\Lambda^{*}_{\overline{\mu_{1}}}\Big(\frac{x_{1}}{\beta_{1}}\Big)+\Lambda^{*}_{\overline{\mu_{2}}}\Big(\frac{x_{2}}{\beta_{2}}\Big)=W(x_{1},x_{2}).
\end{align*}
\item $\mu_{1}*\mu_{2}=T_{\#}(\mu_{1}\otimes\mu_{2})$ for $T(x_{1},x_{2})=x_{1}+x_{2}$ so it is enough to use Part 1 and Part 2.
\item $\overline{\mu}=S_{\#}(\mu\otimes\mu)$, for $S(x_{1},x_{2})=x_{1}-x_{2}$.
\end{enumerate}
\end{proof}

\section{Results}

\subsection{IC for Gaussian and exponential distributions}

We start by analizing the Gaussian distributions.\\ 

\begin{theo}
\label{tw:4}
Every Gaussian distribution on $\mathbb{R}^n$ satisfies $\rm{IC}(1)$. 
\end{theo}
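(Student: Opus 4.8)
The plan is to reduce to the standard Gaussian and then run a Prékopa–Leindler argument. First I would record that every Gaussian measure on $\mathbb{R}^n$ is the image of a standard Gaussian $\gamma^m=\gamma^{\otimes m}$ under an affine map $L(x)=Ax+b$. By Proposition \ref{fakt:6}(1) (affine invariance of IC) it therefore suffices to prove that the standard Gaussian satisfies $\mathrm{IC}(1)$; and since $\gamma^m=\gamma^{\otimes m}$, Proposition \ref{fakt:6}(2) (tensorization) would reduce matters to dimension one. The Prékopa–Leindler step below, however, works verbatim in every dimension, so I would simply treat the standard $\gamma$ on $\mathbb{R}^n$ directly.

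Next I would compute the relevant cost function. For the standard Gaussian $\Lambda_{\gamma}(x)=|x|^2/2$, its symmetrization is $\overline{\gamma}=\gamma*\gamma$, the centered Gaussian with covariance $2\,\mathrm{Id}$, so $\Lambda_{\overline{\gamma}}(x)=|x|^2$ and hence $W:=\Lambda^{*}_{\overline{\gamma}}(x)=|x|^2/4$ (equivalently via the identity $\Lambda^{*}_{\overline{\mu}}(x)=2\Lambda^{*}_{\mu}(x/2)$). Thus $\mathrm{IC}(1)$ for $\gamma$ amounts to property $(\tau)$ for the pair $(\gamma,|\cdot|^2/4)$: writing $Qf(x)=(W\boxempty f)(x)=\inf_{z}\{f(z)+\tfrac14|x-z|^2\}$, I must show $\int e^{Qf}\,d\gamma\int e^{-f}\,d\gamma\leq1$ for every bounded $f$.

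The heart of the proof is Prékopa–Leindler. I would set $F(x)=e^{Qf(x)-|x|^2/2}$, $G(y)=e^{-f(y)-|y|^2/2}$ and $H(z)=e^{-|z|^2/2}$, so that $\int H=(2\pi)^{n/2}$ and property $(\tau)$ is equivalent to $(2\pi)^{-n}\big(\int F\big)\big(\int G\big)\leq1$. A direct computation of exponents shows that the hypothesis $H\big(\tfrac{x+y}{2}\big)\geq\sqrt{F(x)G(y)}$ becomes $\tfrac12 Qf(x)\leq\tfrac12 f(y)+\tfrac14|x|^2+\tfrac14|y|^2-\tfrac18|x+y|^2$, and the quadratic terms collapse via $\tfrac14|x|^2+\tfrac14|y|^2-\tfrac18|x+y|^2=\tfrac18|x-y|^2$ to the single inequality $Qf(x)\leq f(y)+\tfrac14|x-y|^2$, which is immediate from the definition of $Qf$ (take the competitor $z=y$ in the infimum). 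Prékopa–Leindler then gives $(2\pi)^{n/2}=\int H\geq\sqrt{\big(\int F\big)\big(\int G\big)}$, which is exactly the desired inequality.

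The only genuinely delicate point is the bookkeeping in the Prékopa–Leindler step: one must choose the three functions so that the normalizing constant matches ($\int H=(2\pi)^{n/2}$) and verify that all quadratic contributions combine precisely into $\tfrac14|x-y|^2$. Everything else — the affine reduction, the transform computations giving $W=|\cdot|^2/4$, and the use of $z=y$ in the infimum — is routine.
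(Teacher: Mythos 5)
Your proposal is correct, and its skeleton matches the paper's: reduce via affine invariance (Proposition \ref{fakt:6}) to the standard Gaussian, compute $\Lambda^{*}_{\overline{\gamma}}(x)=2\Lambda^{*}_{\gamma}(x/2)=|x|^{2}/4$, and then establish property $(\tau)$ for the pair $(\gamma,|\cdot|^{2}/4)$. The only divergence is in that last step: the paper disposes of it by citing Maurey \cite{Mau}, whereas you prove it outright by Pr\'ekopa--Leindler. Your verification is sound --- the choice $F(x)=e^{Qf(x)-|x|^{2}/2}$, $G(y)=e^{-f(y)-|y|^{2}/2}$, $H(z)=e^{-|z|^{2}/2}$ with $\lambda=\tfrac12$ reduces the Pr\'ekopa--Leindler hypothesis, via the identity $\tfrac14|x|^{2}+\tfrac14|y|^{2}-\tfrac18|x+y|^{2}=\tfrac18|x-y|^{2}$, to $Qf(x)\leq f(y)+\tfrac14|x-y|^{2}$, which holds by taking $z=y$ in the infimum defining $Qf$; squaring $\int H\geq\sqrt{\int F\int G}$ then gives exactly $\int e^{Qf}d\gamma\int e^{-f}d\gamma\leq 1$. (Measurability of $Qf$ is not an issue: it is an infimum of functions continuous in $x$, hence upper semicontinuous, and it is bounded since $f$ is.) It is worth noting that this Pr\'ekopa--Leindler argument \emph{is} Maurey's original proof of the cited fact, so the underlying mathematics coincides; what your version buys is a self-contained proof that moreover works in all dimensions at once, making the tensorization step superfluous, while the paper's citation buys brevity.
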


\begin{proof}
Since any Gaussian distribution is an affine transport of $\gamma^{n}$, thus using  Proposition \ref{fakt:6} it suffices to prove that $\gamma$ satisfies $\rm{IC}(1)$.
Standard calculations show that $\Lambda_{\gamma}^{\ast}(x)=x^{2}/2$.
Since $\gamma$ is a symmetric distribution thus $\Lambda_{\overline{\gamma}}^{\ast}(x)=2\Lambda_{\gamma}^{\ast}(x/2)=x^{2}/4$. The claim follows from the fact that for $G(x)=x^{2}/4$ the pair $(\gamma,G)$ satisfies property $(\tau)$ which was shown in \cite{Mau}.
\end{proof}

Next we turn our attention to exponential distributions.\\

\begin{theo}
\label{tw:5}
\
\begin{enumerate}
\item One sided exponential distribution $\nu_{+}$ satisfies $\rm{IC}(2)$.
\item Symmetric exponential distribution $\nu$ satisfies $\rm{IC}(2)$.
\end{enumerate}
\end{theo}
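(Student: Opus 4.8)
The plan is to prove part (1) directly and obtain part (2) as a formal consequence. Observe first that $\overline{\nu_{+}}=\nu_{+}\ast\nu_{+}'$ is precisely the symmetric exponential $\nu$ (the difference of two independent one-sided exponentials has the Laplace law), so that the cost in $\mathrm{IC}(2)$ for $\nu_{+}$ is $W=\Lambda^{*}_{\nu}(\cdot/2)$. Moreover, once (1) is known, Proposition \ref{fakt:6}(4) immediately gives that $\overline{\nu_{+}}=\nu$ inherits $\mathrm{IC}(2)$, which is (2); equivalently, one could prove the symmetric statement and transport it to $\nu_{+}$ through $T(x)=|x|$, since there $\omega_{T}(h)=h$ by the reverse triangle inequality and Proposition \ref{f:2}(5) then transfers the (radial, nondecreasing) cost with no loss. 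So the whole content sits in (1). As a preliminary I would record the cost explicitly: from $M_{\nu}(s)=(1-s^{2})^{-1}$ on $(-1,1)$ one gets $\Lambda_{\nu}(s)=-\ln(1-s^{2})$, hence $\Lambda^{*}_{\nu}(t)=\sqrt{1+t^{2}}-1-\ln\tfrac{1+\sqrt{1+t^{2}}}{2}$, with Legendre dual $\bigl(\Lambda^{*}_{\nu}(\cdot/2)\bigr)^{*}(s)=-\ln(1-4s^{2})$ on $(-\tfrac12,\tfrac12)$.

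Next I would reduce (1) to a one-dimensional inequality. Writing $g=W\boxempty f$, property $(\tau)$ for $(\nu_{+},W)$ reads $\int_{0}^{\infty}e^{g(x)-x}\,dx\cdot\int_{0}^{\infty}e^{-f(y)-y}\,dy\le 1$. By routine approximation I may assume $f$ smooth; replacing $f$ by $+\infty$ on $(-\infty,0)$ leaves the second factor unchanged but can only increase $g$, so I may take $g(x)=\inf_{y\ge 0}\{W(x-y)+f(y)\}$ and confine all optimisation to the half-line. For such extremal $f$ the infimum is attained at an interior $y(x)$ obeying the envelope identities $g'(x)=W'(x-y(x))=f'(y(x))$, and since $W'$ ranges in $(-\tfrac12,\tfrac12)$ both integrands decay like $e^{-x/2}$, so convergence is automatic.

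The heart of the matter is establishing this inequality, and here I expect the Prékopa–Leindler route that proves the Gaussian case (Theorem \ref{tw:4}) to break down. That method, with $d\mu=e^{-V}dx$, reduces property $(\tau)$ to the pointwise bound $W(x-y)\le V(x)+V(y)-2V(\tfrac{x+y}{2})$, whose right-hand side equals $(x-y)^{2}/4$ for the Gaussian potential $V=x^{2}/2$ but \emph{vanishes identically} for the exponential, where $V$ is affine; in fact the convexity defect of an affine $V$ is zero for every interpolation weight, so no reweighting of Prékopa–Leindler can help. The plan is therefore to exploit the exact multiplicative structure of $e^{-x}$ instead: change variables through the optimiser $x\mapsto y(x)$, use the envelope relations to collapse the double integral into a single integral in the dual variable $p=g'=f'\circ y\in(-\tfrac12,\tfrac12)$, and thereby reduce $\int e^{g-x}\int e^{-f-y}\le 1$ to a sharp calculus inequality for $\Lambda^{*}_{\nu}$ and its derivative. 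I expect the constant $2$ to appear exactly as the threshold making this inequality hold with equality in a limiting regime; running the same computation with constant $1$ and exhibiting the nonlinear $f$ that violates it is precisely what will yield Theorem \ref{tw:7} and the statement that linear functions are not extremal. The \emph{main obstacle} is thus this final sharp analysis of the nonlinear extremisers, since neither linear test functions nor the modulus-of-convexity (Prékopa–Leindler) argument that settles the Gaussian case is available for the affine exponential potential.
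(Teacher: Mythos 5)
Your setup and formal reductions agree with the paper: the identification $\overline{\nu_{+}}=\nu$, the explicit formula for $\Lambda^{*}_{\nu}$, and the deduction of part (2) from part (1) via Proposition \ref{fakt:6}(4) are all exactly what the paper does. But the central step --- actually proving property $(\tau)$ for the pair $\bigl(\nu_{+},\Lambda^{*}_{\nu}(\cdot/2)\bigr)$ --- is never carried out in your proposal: you correctly observe that the Pr\'ekopa--Leindler route used for the Gaussian degenerates for an affine potential, and then you substitute a \emph{plan} (change variables through the optimiser $y(x)$, collapse the product of integrals into a single integral in the dual variable, arrive at a ``sharp calculus inequality'') together with the \emph{expectation} that the constant $2$ will emerge as a threshold. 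No such inequality is stated, let alone proved; moreover the plan itself rests on unjustified steps (existence and smoothness of extremal $f$, attainment of the infimum at interior points, and the collapse of a genuinely two-variable product $\int e^{g-x}dx\int e^{-f-y}dy$ into one integral). The paper fills precisely this hole with Lemma \ref{lem:5.1}, extracted from Maurey's proof: if $2|W'|\leq 1$ and $e^{W}\bigl(1-4(W')^{2}\bigr)\geq 1$, then $(\nu_{+},W)$ has property $(\tau)$. Verifying these two differential conditions for $W=\Lambda^{*}_{\nu}(\cdot/2)$ is then a short computation that reduces to $e^{y}\geq 1+y$. Some substitute for this lemma (a workable sufficient condition on the cost, or a completed variational argument) is the missing idea; without it your proposal is a program, not a proof.

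A secondary error: your parenthetical claim that one could ``equivalently'' prove (2) first and transport it to $\nu_{+}$ through $T(x)=|x|$ does not work. Transporting $\mathrm{IC}(2)$ for $\nu$ by Proposition \ref{f:2}(5) gives the pair $\bigl(\nu_{+},\,2\Lambda^{*}_{\nu}(|\cdot|/4)\bigr)$, whereas $\mathrm{IC}(2)$ for $\nu_{+}$ requires the cost $\Lambda^{*}_{\nu}(\cdot/2)$; by convexity of $\Lambda^{*}_{\nu}$ and $\Lambda^{*}_{\nu}(0)=0$ one has
\begin{align*}
2\Lambda^{*}_{\nu}\Bigl(\frac{|x|}{4}\Bigr)\leq\Lambda^{*}_{\nu}\Bigl(\frac{|x|}{2}\Bigr),
\end{align*}
so the transported cost is \emph{weaker} than the one you need, and the implication runs only in the direction the paper uses: from (1) to (2) by symmetrization, not conversely.
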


In the proof of Theorem \ref{tw:5} we will use the following
\ \\
\begin{lem}
\label{lem:5.1}
If a function $W\geq0$ satisfies the following two conditions
\begin{enumerate}
\item $2|W'|\leq 1$,
\item $e^{W}(1-4(W')^2)\geq1$,
\end{enumerate}
then the pair $(\nu_{+},W)$ satisfies property $(\tau)$.
\end{lem}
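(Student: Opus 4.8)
The plan is to verify the defining inequality \eqref{eq:2} of property $(\tau)$ directly, exploiting the memoryless structure of $\nu_{+}$, and to reduce the whole statement to a single pointwise estimate that is then closed using conditions 1 and 2. First I would make three harmless reductions. Since both sides of \eqref{eq:2} are invariant under $f\mapsto f+c$, I normalize $\int_{0}^{\infty}e^{-f(y)}e^{-y}\,dy=1$, so the goal becomes $\int_{0}^{\infty}e^{(W\boxempty f)(x)}e^{-x}\,dx\le 1$. Next, increasing $f$ off $[0,\infty)$ enlarges $W\boxempty f$ on $[0,\infty)$ while leaving $\int e^{-f}\,d\nu_{+}$ (an integral over $[0,\infty)$ only) unchanged, so it suffices to prove the inequality for $f$ equal to $+\infty$ off $[0,\infty)$ (with the convention $\infty\cdot0=0$), for which every infimum $(W\boxempty f)(x)=\inf_{y\ge0}\{W(x-y)+f(y)\}$ may be taken over $y\ge0$. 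Finally I approximate $f$ by smooth functions. It is convenient to carry the normalization through the survival function $G(x)=\int_{x}^{\infty}e^{-f(y)-y}\,dy$, so that $G(0)=1$, $G(\infty)=0$ and $e^{-f(x)}=-e^{x}G'(x)$.

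For the core estimate I use that for \emph{any} measurable map $Y\colon[0,\infty)\to[0,\infty)$ one has $(W\boxempty f)(x)\le W(x-Y(x))+f(Y(x))$, whence, after substituting $e^{f(Y)}=e^{-Y}/(-G'(Y))$,
\[
e^{(W\boxempty f)(x)}\,e^{-x}\;\le\;\frac{e^{\,W(x-Y(x))-Y(x)-x}}{-G'(Y(x))}.
\]
I then take $Y$ to be an increasing bijection of $[0,\infty)$, substitute $x=X(y)$ with $X=Y^{-1}$, and compare the resulting integrand with $-G'(y)$. Since $\int_{0}^{\infty}(-G'(y))\,dy=G(0)=1$, the whole inequality $\int_{0}^{\infty}e^{W\boxempty f}\,d\nu_{+}\le1$ follows as soon as the transport map $X$ satisfies, pointwise in $y$,
\[
W\bigl(X(y)-y\bigr)-\bigl(X(y)-y\bigr)+\ln X'(y)\;\le\;-2f(y).\qquad(\star)
\]
Writing $D=X-\mathrm{id}$, this reads $W(D)-D+\ln(1+D')\le-2f$, a first-order differential inequality in which $f$ enters linearly; the natural candidate is the solution of the associated equation $1+D'=e^{-2f-W(D)+D}$ with $D(0)=0$, which starts $X$ at the origin.

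The two hypotheses enter precisely at this point. Condition 1, $2|W'|\le1$, guarantees that $W\boxempty f$ is $\tfrac12$-Lipschitz and keeps the candidate displacement $D$ controlled, so that $X$ is genuinely increasing and onto $[0,\infty)$ and the change of variables is licit and captures the full mass $G(0)=1$. Condition 2, in the equivalent form $4(W')^{2}\le1-e^{-W}$, is what is consumed when one estimates $\ln(1+D')$ by completing the square in $W'$; the quadratic dependence on $W'$ in condition 2 is exactly the signature of this step, and the equality case $e^{W}(1-4(W')^{2})=1$, solved by the extremal cost $W(x)=2\ln\cosh(x/4)$, is where the sharp constant originates.

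The main obstacle, and the genuine analytic heart of the argument, is the reconciliation of this local ODE data with the global requirement that $X$ be a bijection of $[0,\infty)$: one must show that the map furnished by condition 2 neither leaves $[0,\infty)$ nor loses monotonicity before the mass is exhausted, so that $(\star)$ holds throughout. I expect everything else to be routine bookkeeping. A more robust alternative, should the transport map prove awkward to control, is the interpolation method: introduce a one-parameter family $Q_{s}f$ generated by the cost, set $\Phi(s)=\int e^{Q_{s}f}\,d\nu_{+}\int e^{-f}\,d\nu_{+}$, check $\Phi(0)\le1$ directly, and show $\Phi'(s)\le0$, the derivative again reducing to a pointwise inequality in $W$ and $W'$ that is precisely conditions 1 and 2. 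Either way the delicate point is the same pointwise estimate together with the admissibility of the construction.
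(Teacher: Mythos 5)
Your reductions and the derivation of the pointwise criterion are correct: with the normalization $\int e^{-f}\,d\nu_{+}=1$ and an increasing bijection $X$ of $[0;\infty)$ onto itself, the bound $(W\boxempty f)(X(y))\le W(X(y)-y)+f(y)$ and the change of variables $x=X(y)$ do reduce \eqref{eq:2} to the inequality $W(D)-D+\ln(1+D')\le -2f$ with $D=X-\mathrm{id}$ (your $(\star)$), and your identification of the equality case of condition 2, $W(x)=2\ln\cosh(x/4)$, is also correct. The gap is that you never produce an admissible $X$, and your two proposed mechanisms for doing so are incompatible. If $D$ solves $1+D'=e^{-2f-W(D)+D}$, $D(0)=0$, then $(\star)$ holds as an \emph{identity} for every smooth cost $W$, irrespective of conditions 1 and 2; they are never used pointwise and there is no square to complete. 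The entire content of the lemma is then the claim that this $X$ maps $[0;\infty)$ \emph{onto} $[0;\infty)$, i.e.\ $X(y)\to\infty$, without which the change of variables loses mass --- and this is exactly the point you leave open. If instead conditions 1 and 2 are to be ``consumed'' in a pointwise estimate of $\ln(1+D')$, you must first exhibit a different, explicit choice of $D$, which you never do. So what you call routine bookkeeping is all that is on paper, and what you call the genuine analytic heart is absent (the alternative semigroup route is likewise only named, not carried out).

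That the missing step carries the whole load can be seen concretely. The constant cost $W\equiv1$ satisfies both hypotheses ($2|W'|=0\le1$ and $e^{W}(1-4(W')^{2})=e\ge1$), yet $(\nu_{+},W)$ fails property $(\tau)$: taking $f=0$ in \eqref{eq:2} gives $e^{\inf W}=e>1$. (The lemma implicitly assumes $W(0)=0$, as in Maurey's setting, and is false without it.) None of the steps you wrote down --- the normalization, the restriction to $y\ge0$, $(\star)$, the ODE --- uses $W(0)=0$ or distinguishes $W\equiv1$ from an admissible cost; correspondingly, for $W\equiv1$, $f=0$ the ODE gives $D'=e^{D-1}-1<0$, so $D\to-\infty$, $X'=e^{D-1}\to0$ exponentially fast, and $X$ stays bounded: surjectivity is precisely what fails. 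Hence any correct completion must deduce surjectivity of $X$ from $W(0)=0$ together with conditions 1 and 2, and that argument --- which is the actual proof --- is missing. For comparison, the paper does not construct a transport map at all: its proof of Lemma \ref{lem:5.1} consists of inspecting Maurey's proof in \cite{Mau} that $(\nu,U)$ has property $(\tau)$ and observing that the only properties of the cost used there are conditions 1 and 2, so your attempt, if completed, would in effect be a self-contained reconstruction of that argument.
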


\begin{proof}
In \cite{Mau} it was shown that the pair $(\nu,U)$ satisfies property $(\tau)$, where
\begin{align*}
U(x)=\left\{
\begin{array}{ll}
\frac{1}{36}x^2 &, |x|\leq4
\\
\frac{2}{9}(|x|-2) &, |x|>4
\end{array}
\right..
\end{align*}
From that proof it follows that conditions given in the Lemma are sufficient for the pair $(\nu_+,W)$ to have the property $(\tau)$.
\end{proof}

\begin{proof}[Proof of Theorem \ref{tw:5}]
To prove the first part we need to show that
\begin{align*}
W(x)=\Lambda^{\ast}_{\overline{\nu_{+}}}(x/2)=\Lambda^{\ast}_{\nu}(x/2)
\end{align*}
satisfies conditions given in the Lemma \ref{lem:5.1}. 
Denote 
\begin{align*}
\phi(x)=W(2x)=\Lambda^{\ast}_{\nu}(x). 
\end{align*}
We calculate
\begin{align*}
\phi(x)=\sqrt{x^2+1}-1-\ln{\Big(\frac{\sqrt{x^{2}+1}+1}{2}\Big)},\quad \phi'(x)=\frac{x}{\sqrt{x^{2}+1}+1}, 
\end{align*}
from which $2|W'(x)|=|\phi'(x/2)|\leq1$. \\
The second condition of Lemma \ref{lem:5.1} follows from the following estimation:
\begin{align*} e^{W(2x)}(1-4(W'(2x))^2)&=e^{\phi(x)}(1-(\phi'(x))^2)=e^{\sqrt{x^{2}+1}-1}\frac{2}{\sqrt{x^{2}+1}+1}\Big(1-\Big(\frac{x}{\sqrt{x^{2}+1}+1}\Big)^2\Big)
\\
&=e^{\sqrt{x^{2}+1}-1}\frac{4}{(\sqrt{x^{2}+1}+1)^2}=\Big(\frac{e^y}{y+1}\Big)^2\geq1, 
\end{align*}
where $y=\frac{\sqrt{x^{2}+1}-1}{2}$.   
\\
The second part of the Theorem \ref{tw:5} is a consequence of $\nu=\overline{\nu_{+}}$ and the fourth part of Proposition \ref{fakt:6}.
\end{proof}

\begin{rem}
\noindent It can be shown that $\Lambda^{*}_{\overline{\nu}}(x/2)>U(x)$ and thus Theorem \ref{tw:5} improves the result from \cite{Mau}.
\end{rem}

The next Theorem gives a negative answer to the hypothesis that for the exponential distribution linear functions are extremal in the property $(\tau)$.\\

\begin{theo}
\
\label{tw:7}
\begin{enumerate}
\item  $\nu_{+}$ does not satisfy $\rm{IC}(1)$.
\item $\nu$ does not satisfy $\rm{IC}(1)$.
\end{enumerate}
\end{theo}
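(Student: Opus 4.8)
The plan is to disprove property $(\tau)$ directly for the pairs $(\nu_{+},\Lambda^{*}_{\overline{\nu_{+}}})$ and $(\nu,\Lambda^{*}_{\overline{\nu}})$ by exhibiting, in each case, a bounded test function $f$ for which the product in \eqref{eq:2} exceeds $1$. Write $W=\Lambda^{*}_{\overline{\nu_{+}}}=\Lambda^{*}_{\nu}=\phi$ for the one-sided case, where $\phi$ is the function computed in the proof of Theorem \ref{tw:5}; in particular $\phi(0)=0$, $\phi'(0)=0$ and $\phi''(0)=\tfrac12$. The starting observation is that \emph{affine} test functions $f(x)=-\xi x$ produce equality: a short Legendre computation gives $(W\boxempty f)(x)=-\xi x-W^{*}(-\xi)$, so the product in \eqref{eq:2} equals $e^{-W^{*}(-\xi)}M_{\nu_{+}}(\xi)M_{\nu_{+}}(-\xi)=e^{-\Lambda_{\overline{\nu_{+}}}(\xi)+\Lambda_{\overline{\nu_{+}}}(\xi)}=1$, using $W^{*}=\Lambda_{\overline{\nu_{+}}}$ and its symmetry. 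This is exactly the sense in which linear functions were expected to be extremal, and the point is to show they are not maximisers.

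I would therefore perturb the trivial affine choice $f\equiv 0$ (which also gives the value $1$, since $W\boxempty 0\equiv\inf W=0$) and compute the second variation. Taking $f=\epsilon g$ with $g$ bounded and Lipschitz, the local behaviour $\phi(z)\approx\tfrac14 z^{2}$ near $0$ gives, after minimising $z\mapsto\tfrac14 z^{2}-\epsilon z g'(x)$, the pointwise expansion
\begin{align*}
(\phi\boxempty\epsilon g)(x)=\epsilon g(x)-\epsilon^{2}\big(g'(x)\big)^{2}+o(\epsilon^{2}).
\end{align*}
Substituting into \eqref{eq:2} and expanding both factors to second order, the first-order terms cancel and one is left with
\begin{align*}
\int e^{\phi\boxempty\epsilon g}\,d\nu_{+}\int e^{-\epsilon g}\,d\nu_{+}=1+\epsilon^{2}\Big(\mathrm{Var}_{\nu_{+}}(g)-\int (g')^{2}\,d\nu_{+}\Big)+o(\epsilon^{2}).
\end{align*}
Thus it suffices to produce a bounded Lipschitz $g$ with $\mathrm{Var}_{\nu_{+}}(g)>\int (g')^{2}\,d\nu_{+}$, i.e. to beat the constant $1$ in a Poincar\'e-type comparison; since the exponential law only satisfies $\mathrm{Var}\le 4\int(g')^{2}$, there is ample room. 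Concretely, the truncation $g(x)=\min\{x^{2},M^{2}\}$ works: for the untruncated $x^{2}$ one has $\mathrm{Var}_{\nu_{+}}(x^{2})=E X^{4}-(EX^{2})^{2}=24-4=20$ while $\int(2x)^{2}\,d\nu_{+}=8$, and for $M$ large the truncated quantities are arbitrarily close to these, so the bracket stays positive. Hence for small $\epsilon>0$ the product exceeds $1$, property $(\tau)$ fails, and $\nu_{+}$ does not satisfy $\mathrm{IC}(1)$.

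The symmetric case is handled identically: here $W=\Lambda^{*}_{\overline{\nu}}(x)=2\Lambda^{*}_{\nu}(x/2)=2\phi(x/2)$, so $W''(0)=\tfrac14$, the local quadratic is $\tfrac18 z^{2}$, and the same minimisation yields $(W\boxempty\epsilon g)(x)=\epsilon g(x)-2\epsilon^{2}(g'(x))^{2}+o(\epsilon^{2})$. The second variation becomes $\mathrm{Var}_{\nu}(g)-2\int(g')^{2}\,d\nu$, and with the same $g=\min\{x^{2},M^{2}\}$ one computes $\mathrm{Var}_{\nu}(x^{2})=24-4=20>16=2\int(2x)^{2}\,d\nu$, again positive for large $M$; hence $\nu$ does not satisfy $\mathrm{IC}(1)$. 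Note this cannot be deduced from Part 1 via Proposition \ref{fakt:6}, whose symmetrisation statement runs in the opposite direction ($\nu_{+}$ gives $\overline{\nu_{+}}=\nu$, not conversely).

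The main obstacle I expect is making the inf-convolution expansion rigorous: one must show that the minimiser $z^{*}(x)$ in $(\phi\boxempty\epsilon g)(x)=\inf_{z}[\phi(z)+\epsilon g(x-z)]$ is $O(\epsilon)$ uniformly in $x$ — which holds because $g'$ is bounded (by $2M$) and $\phi$ is strictly convex with $\phi''(0)>0$ — and that the $o(\epsilon^{2})$ remainder is dominated so that the pointwise expansion may be integrated. Boundedness of $g$ supplies the crude estimates $0\le\phi\boxempty\epsilon g\le\epsilon g\le\epsilon M^{2}$, which give the domination needed to pass to the limit; the uniform second-order error control is the only genuinely delicate point.
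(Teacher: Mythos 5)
Your argument is correct in substance, and it takes a genuinely different route from the paper. The paper proves Theorem \ref{tw:7} by testing property $(\tau)$ with the cost function itself: Lemma \ref{lem:5.2} (substitute $f=W=\Lambda^{*}_{\overline{\mu}}$ and use $W\boxempty W=2W(\cdot/2)$) shows that $\mathrm{IC}(1)$ forces
\begin{align*}
\int e^{2\Lambda^{*}_{\overline{\mu}}(x/2)}\,d\mu\int e^{-\Lambda^{*}_{\overline{\mu}}(x)}\,d\mu\leq 1,
\end{align*}
and this inequality is then refuted by numerical integration in Mathematica (products $1.29446$ for $\nu_{+}$ and $1.01732$ for $\nu$). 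You instead linearize property $(\tau)$ around $f\equiv 0$: the second variation yields the necessary Poincar\'e-type inequality $\mathrm{Var}_{\mu}(g)\leq\frac{1}{2W''(0)}\int(g')^{2}\,d\mu$, i.e.\ constant $1$ for $\nu_{+}$ (since $\phi''(0)=1/2$) and constant $2$ for $\nu$ (since here $W''(0)=1/4$), and you violate it with a truncation of $g(x)=x^{2}$ using the exact moments $EX^{2}=2$, $EX^{4}=24$, giving $20>8$ and $20>16$. Your expansions check out (the product is indeed $1+\epsilon^{2}(\mathrm{Var}_{\mu}(g)-c\int(g')^{2}d\mu)+o(\epsilon^{2})$ with the correct $c$ in both cases). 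What each approach buys: the paper's proof is very short but rests on numerics, and in the symmetric case on a slim margin ($1.01732$); yours needs no numerics at all, has comfortable margins, and isolates the conceptual reason linear functions are not extremal --- the sharp Poincar\'e constant of the exponential law is $4$, strictly larger than the constants $1$ and $2$ that $\mathrm{IC}(1)$ would impose. Your closing remark is also right: Part 2 cannot be deduced from Part 1 via Proposition \ref{fakt:6}; in fact the implication runs the other way (Part 2 implies Part 1 by the contrapositive of the symmetrization statement, so strictly speaking the symmetric case alone suffices).

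Two points to tighten. First, $g=\min\{x^{2},M^{2}\}$ is Lipschitz but not $C^{1}$, and near the kinks $|x|=M$ the pointwise expansion $(\phi\boxempty\epsilon g)(x)=\epsilon g(x)-\epsilon^{2}(g'(x))^{2}+o(\epsilon^{2})$ is \emph{not} uniform: just to the right of $M$ the infimum convolution dips by order $\epsilon^{2}M^{2}$ in a band of width $O(\epsilon M)$, while the formula with $g'(x)=0$ predicts no dip. This is harmless (the band contributes $o(\epsilon^{2})$ to the integral, and your bounds $0\leq\phi\boxempty\epsilon g\leq\epsilon M^{2}$ give domination), but the cleanest fix is to smooth the truncation so that $g\in C^{2}$ with bounded $g''$; then the minimizer satisfies $|z^{*}(x)|\leq(\phi')^{-1}(2M\epsilon)=O(\epsilon)$ uniformly in $x$ and the expansion holds with a uniform error term. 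Second, your motivational computation with affine $f(x)=-\xi x$ uses unbounded test functions, which Definition \ref{defi:2} does not admit; keep it as motivation only, not as an instance of \eqref{eq:2}.
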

\ \\
To prove Theorem \ref{tw:7} we will use the following\\
\begin{lem}
\label{lem:5.2}
If $\mu$ satisfies $\rm{IC}(1)$, then 
\begin{align*}
\int{e^{2\Lambda^{*}_{\overline{\mu}}(x/2)}d\mu(x)}\int{e^{-\Lambda^{*}_{\overline{\mu}}(x)}d\mu(x)}\leq 1.
\end{align*}
\end{lem}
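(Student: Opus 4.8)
The plan is to apply the defining inequality of property $(\tau)$ to one carefully chosen test function. By hypothesis $\mu$ satisfies $\mathrm{IC}(1)$, which by definition means that the pair $(\mu,W)$ with $W=\Lambda^{*}_{\overline{\mu}}$ satisfies property $(\tau)$, i.e.
\[
\int e^{W\boxempty f}\,d\mu\int e^{-f}\,d\mu\le 1
\]
for every bounded $f$. The claim will drop out by taking $f=W=\Lambda^{*}_{\overline{\mu}}$ and identifying $W\boxempty W$, so the only real work is the computation of that self-infimum-convolution and the handling of the boundedness restriction.

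The key computation is the self-infimum-convolution of $W$. Since $\overline{\mu}$ is a probability measure, its Cramer transform $W=\Lambda^{*}_{\overline{\mu}}$ is convex and nonnegative. Hence Proposition \ref{f:1}(7) applies and gives $(W\boxempty W)(x)=2W(x/2)=2\Lambda^{*}_{\overline{\mu}}(x/2)$. Substituting $f=W$ into the property $(\tau)$ inequality therefore produces exactly
\[
\int e^{2\Lambda^{*}_{\overline{\mu}}(x/2)}\,d\mu(x)\int e^{-\Lambda^{*}_{\overline{\mu}}(x)}\,d\mu(x)\le 1,
\]
which is the assertion.

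The one point requiring care, and the main obstacle, is that Definition \ref{defi:2} only guarantees the inequality for \emph{bounded} $f$, whereas $W=\Lambda^{*}_{\overline{\mu}}$ is in general unbounded; I would remove this restriction by truncation. Set $f_{n}=W\wedge n$, which is bounded and nonnegative, so property $(\tau)$ gives $\int e^{W\boxempty f_{n}}\,d\mu\int e^{-f_{n}}\,d\mu\le 1$. Because $f_{n}\le W$ we have $e^{-W}\le e^{-f_{n}}$, and hence $\int e^{W\boxempty f_{n}}\,d\mu\int e^{-W}\,d\mu\le 1$ for every $n$. It then remains to let $n\to\infty$ in the first factor.

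For the passage to the limit I would show $W\boxempty f_{n}\nearrow W\boxempty W$ pointwise. The bound $W\boxempty f_{n}\le W\boxempty W$ is immediate from $f_{n}\le W$. For the reverse bound fix $x$ and $y$: if $W(y)\le n$ then $W(x-y)+f_{n}(y)=W(x-y)+W(y)\ge (W\boxempty W)(x)$, while if $W(y)>n$ then $W(x-y)+f_{n}(y)\ge n$ since $W\ge 0$; taking the infimum over $y$ yields $(W\boxempty f_{n})(x)\ge\min\{(W\boxempty W)(x),\,n\}$, which increases to $(W\boxempty W)(x)$. Thus $e^{W\boxempty f_{n}}\nearrow e^{W\boxempty W}$, and the monotone convergence theorem lets me pass to the limit in $\int e^{W\boxempty f_{n}}\,d\mu$, giving $\int e^{W\boxempty W}\,d\mu\int e^{-W}\,d\mu\le 1$. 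This is precisely the stated inequality (with the convention $\infty\cdot 0=0$ covering the degenerate case $\int e^{-W}\,d\mu=0$).
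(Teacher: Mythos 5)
Your proof is correct, and its core is exactly the paper's argument: substitute $f=W=\Lambda^{*}_{\overline{\mu}}$ into the property $(\tau)$ inequality and identify $(W\boxempty W)(x)=2W(x/2)$ via convexity of the Cramer transform and Part 7 of Proposition \ref{f:1}. Where you go beyond the paper is in the truncation step: the published proof applies Definition \ref{defi:2} directly to $W$, even though $W=\Lambda^{*}_{\overline{\mu}}$ is typically unbounded and the definition is stated only for bounded $f$, so the paper's one-line proof leaves this point implicit. Your argument closes that gap cleanly: $f_{n}=W\wedge n$ is bounded since $0\leq W$, the bound $(W\boxempty f_{n})(x)\geq\min\{(W\boxempty W)(x),\,n\}$ is proved correctly by splitting on whether $W(y)\leq n$, monotonicity of $n\mapsto W\boxempty f_{n}$ justifies the monotone convergence theorem, and the convention $\infty\cdot 0=0$ covers the degenerate case $\int e^{-W}\,d\mu=0$. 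This is in fact the same regularization the author relegated to an auxiliary proposition that does not appear in the final text, so your write-up is a more self-contained version of the paper's proof rather than a different route.
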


\begin{proof}
We substitute $f=W=\Lambda^{*}_{\overline{\mu}}$ in the Definition \ref{defi:2} and use the identity $f\boxempty W=W\boxempty W=2W(\cdot/2)$, which is a consequence of convexity of $W$ and Part 7 of Proposition \ref{f:1}). 
\end{proof}

\begin{proof}[Proof of Theorem \ref{tw:7}]
Using Lemma \ref{lem:5.2} it is enough to show that
\begin{align*}
\int{e^{2\Lambda^{*}_{\overline{\nu_{+}}}(x/2)}d\nu_{+}(x)}\int{e^{-\Lambda^{*}_{\overline{\nu_{+}}}(x)}d\nu_{+}(x)}> 1,
\end{align*}
and
\begin{align*}
\int{e^{2\Lambda^{*}_{\overline{\nu}}(x/2)}d\nu(x)}\int{e^{-\Lambda^{*}_{\overline{\nu}}(x)}d\nu(x)}>1.
\end{align*}
Denote
\begin{align*}
f(x)=\Lambda^{*}_{\nu}(x)=\sqrt{x^2+1}-1-\ln{\Big(\frac{\sqrt{x^{2}+1}+1}{2}\Big)}.
\end{align*}
Then
\begin{align*}
\Lambda^{*}_{\overline{\nu_{+}}}(x)=\Lambda^{*}_{\nu}(x)=f(x),\quad
\Lambda^{*}_{\overline{\nu}}(x)=2\Lambda^{*}_{\nu}(x/2)=2f(x/2),
\end{align*}
which after change of variables is equivalent to 
\begin{align*}
2\int_{0}^{\infty}{e^{2(f(y)-y)}dy}\int_{0}^{\infty}{e^{-(f(y)+y)}dy}>1,
\\
8\int_{0}^{\infty}{e^{4(f(y)-y)}dy}\int_{0}^{\infty}{e^{-2(f(y)+y)}dy}>1.
\end{align*}

The above inequalities where verified using numerical integration in Mathematica software. We include the computations
\\
\\
\noindent\(\pmb{f[\text{y$\_$}]\text{:=}\text{Sqrt}[1+y{}^{\wedge}2]-1-\text{Log}[(\text{Sqrt}[1+y{}^{\wedge}2]+1)/2]}\)

\noindent\(\pmb{\text{I1}=\text{NIntegrate}[\text{Exp}[2(f[y]-y)],\{y,0,\text{Infinity}\}]}\)

\noindent\(0.822119\)

\noindent\(\pmb{\text{I2}=\text{NIntegrate}[\text{Exp}[-(f[y]+y)], \{y,0,\text{Infinity}\}]}\)

\noindent\(0.787272\)

\noindent\(\pmb{2*\text{I1}*\text{I2}}\)

\noindent\(1.29446\)

\noindent\(\pmb{\text{I3}=\text{NIntegrate}[\text{Exp}[4(f[y]-y)],\{y,0,\text{Infinity}\}]}\)

\noindent\(0.29795\)

\noindent\(\pmb{\text{I4}=\text{NIntegrate}[\text{Exp}[-2(f[y]+y)],\{y,0,\text{Infinity}\}]}\)

\noindent\(0.426799\)

\noindent\(\pmb{8*\text{I3}*\text{I4}}\)

\noindent\(1.01732\)

\end{proof}

\subsection{IC for log-concave distributions}

The next Theorem deals with the behaviour of IC under transport of measure by certain special class of maps

\begin{prop}
\label{tw:8}
Assume that $T\colon\mathbb{R}\to\mathbb{R}$ satisfies the following conditions 
\begin{enumerate}
\item is odd and nondecreasing, 
\item is concave on $[0;\infty)$, 
\item there exists finite $T'(0)$, 
\item $\int{x^{2}dT_{\#}\nu}=1$. 
\end{enumerate}
If $c\geq T'(0)$ then measure $T_{\#}\nu$ satisfies $\rm{IC}(2c\delta)$, where $\delta>0$ and $\Lambda^{*}_{\nu}(\delta)=\ln2+1/c$. 
\end{prop}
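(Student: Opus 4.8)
The plan is to transport onto $T_\#\nu$ the property $(\tau)$ that $\nu$ already enjoys by Theorem \ref{tw:5}, using Part 5 of Proposition \ref{f:2}, and then to check that the transported cost dominates the Cram\'er cost $\Lambda^*_{\overline{T_\#\nu}}(\cdot/\beta)$ demanded by $\mathrm{IC}(\beta)$ with $\beta=2c\delta$. First I would record the structure forced by the hypotheses. Since $T$ is odd and $\nu$ symmetric, $T_\#\nu$ is symmetric, so $\Lambda^*_{\overline{T_\#\nu}}(x)=2\Lambda^*_{T_\#\nu}(x/2)$. Since $T$ is concave and nondecreasing on $[0;\infty)$ with $T'(0)$ finite, its derivative is even and bounded by $T'(0)\le c$, so $T$ is $c$-Lipschitz and $|T(u)|\le c|u|$. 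Theorem \ref{tw:5} gives that $(\nu,W)$ has property $(\tau)$ with the even nondecreasing radial cost $W(x)=\Lambda^*_{\overline{\nu}}(x/2)=2f(|x|/4)$, where $f:=\Lambda^*_\nu$. Because property $(\tau)$ only improves when the cost is decreased (if $(\mu,W)$ has $(\tau)$ and $V\le W$ then so does $(\mu,V)$, directly from Definition \ref{defi:2} and monotonicity of $\boxempty$ in its cost), it suffices to exhibit a single cost $V_0\ge\Lambda^*_{\overline{T_\#\nu}}(\cdot/\beta)$ for which $(T_\#\nu,V_0)$ has property $(\tau)$.

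Next I would compute the modulus $\omega_T$ from Proposition \ref{f:2}(5). Writing $S:=(T|_{[0;\infty)})^{-1}$, which is convex, increasing, with $S(0)=0$, the extremal pair realizing $\omega_T(h)$ is symmetric about the origin: because $T'$ is even and decreasing away from $0$, among all intervals producing a prescribed increment $h$ of $T$ the shortest is centered at $0$, giving $\omega_T(h)=2S(h/2)$. Proposition \ref{f:2}(5) then furnishes property $(\tau)$ for $(T_\#\nu,V_0)$ with the radial profile composed with $\omega_T$, namely $V_0(x)=2f\bigl(\tfrac12 S(|x|/2)\bigr)$. Feeding this, together with the symmetrization identity above, into the requirement $V_0\ge\Lambda^*_{\overline{T_\#\nu}}(\cdot/\beta)$ reduces the entire proposition to the scalar comparison
\[
\Lambda^*_{T_\#\nu}(y)\ \le\ f\!\left(\tfrac12 S(\beta y)\right),\qquad y>0,\quad \beta=2c\delta .
\]

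The main obstacle is precisely this inequality. Dualizing it (both sides are convex in $y$ and the Legendre transform reverses order) converts it into a lower bound for the Laplace exponent $\Lambda_{T_\#\nu}(s)=\ln\int_0^\infty\cosh(sT(u))\,e^{-u}\,du$. The delicate point is that the crude tail bound $\Lambda^*_{T_\#\nu}(y)\le\ln 2+S(y)$ coming from $T_\#\nu([y;\infty))=\tfrac12 e^{-S(y)}$ is far too lossy near the origin (its constant term $\ln 2$ already exceeds the left-hand side there), so it cannot be used alone. Instead one must interpolate between the behaviour of $\Lambda^*_{T_\#\nu}$ near $0$, governed by the normalization $\int T^2\,d\nu=1$ so that $\Lambda^*_{T_\#\nu}(y)\sim y^2/2$, and its growth at large argument, governed through $T'(0)\le c$ together with the concavity chord bound $T(u)\ge (T(a)/a)\,u$ on $[0;a]$.

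I expect the role of $\delta$ to be exactly the calibration that balances these two regimes. The defining relation $f(\delta)=\ln 2+1/c$ places the crossover at the scale where $S=1/c$: there the tail estimate contributes the term $\ln 2+1/c$, which matches the value of $f$ at $\delta$, so the displayed comparison becomes tight. The technical heart of the argument is therefore to make this balancing rigorous for every admissible $T$ — combining the near-origin variance bound, the Lipschitz bound $T(a)\le ca$, and the convexity of $S$ to establish the displayed inequality uniformly — rather than the transport and reduction steps, which are essentially formal once the modulus $\omega_T=2S(\cdot/2)$ and the symmetrization identity are in hand.
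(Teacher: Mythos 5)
Your reduction is exactly the paper's: transport property $(\tau)$ from the pair $(\nu,\Lambda^{*}_{\overline{\nu}}(\cdot/2))$ given by Theorem \ref{tw:5} through Part 5 of Proposition \ref{f:2}, identify the modulus $\omega_{T}$ (your formula $\omega_{T}(h)=2S(h/2)$ is Lemma \ref{lem:8.1} in disguise, and values $h\notin 2T(\mathbb{R})$ give $\omega_{T}(h)=\infty$, so they are harmless), unfold $\Lambda^{*}_{\overline{T_{\#}\nu}}$ by symmetry of $T_{\#}\nu$, and arrive at the scalar comparison $\Lambda^{*}_{T_{\#}\nu}(y)\le f\bigl(\tfrac12 S(\beta y)\bigr)$, equivalently $\Lambda^{*}_{T_{\#}\nu}(T(x)/\beta)\le\Lambda^{*}_{\nu}(x/2)$ for $x\ge0$. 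Everything up to that point is correct and matches the paper's proof.

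But there you stop: you declare that the ``technical heart'' is to interpolate between the variance-governed regime near the origin and the tail-governed regime at infinity, and you ``expect'' $\delta$ to calibrate the crossover. That is a plan, not a proof, and it is precisely the part that carries all the content of the proposition (as you yourself note, the transport steps are formal). Concretely, three ingredients are missing. First, near the origin you invoke $\Lambda^{*}_{T_{\#}\nu}(y)\sim y^{2}/2$; an asymptotic statement is useless here --- what is needed is a bound uniform on all of $[0;1]$, valid for every symmetric isotropic measure, namely $\Lambda^{*}_{T_{\#}\nu}(s)\le(\ln\cosh)^{*}(s)$ (Lemma \ref{lem:8.2}, taken from \cite{LW}). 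Second, that bound must be compared back to $\Lambda^{*}_{\nu}$: one needs $(\ln\cosh)^{*}(s)\le\Lambda^{*}_{\nu}(\theta s)$ on $[0;1]$ with $\theta$ defined by $\Lambda^{*}_{\nu}(\theta)=\ln2$ (Lemma \ref{lem:8.4}, a genuine calculus argument), together with $\theta\le\delta$, which yields Case 1 ($cx/\beta\le1$) via $T(x)/\beta\le cx/\beta$ and $\theta c/\beta\le\tfrac12$; nothing in your sketch plays the role of this second comparison. Third, in the regime $cx/\beta\ge1$ the tail bound you dismiss as ``too lossy near the origin'' is exactly the tool used, and closing this case still requires showing $x/\beta+\ln2\le\Lambda^{*}_{\nu}(x/2)$ for all $x\ge\beta/c$; this follows from convexity of $\Lambda^{*}_{\nu}$ and the equality of the two sides at $x=\beta/c$, and it is the one place where the defining relation $\Lambda^{*}_{\nu}(\delta)=\ln2+1/c$ actually enters. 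Your alternative suggestion to dualize the inequality into a lower bound on $\Lambda_{T_{\#}\nu}$ is likewise left unexecuted. In short: the scaffolding is right and agrees with the paper, but the load-bearing inequality is asserted as an expectation rather than proved.
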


To prove Proposition \ref{tw:8} we will use four Lemmas. Lemmas \ref{lem:8.2} and \ref{lem:8.3} were proved in \cite{LW}.
\ \\
\begin{lem}
\label{lem:8.1}
Under the assumptions of Proposition \ref{tw:8} one has $\omega_{T}(2T(x))=2x$ for $x\geq0$.
\end{lem}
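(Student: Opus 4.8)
The plan is to read everything from the definition
\[
\omega_{T}(h)=\inf\{|u-v|:|T(u)-T(v)|\geq h\},\qquad \inf\emptyset=\infty,
\]
and to prove the two inequalities $\omega_{T}(2T(x))\leq 2x$ and $\omega_{T}(2T(x))\geq 2x$ separately for $x\geq 0$. The upper bound is immediate from oddness: the symmetric pair $u=x,\ v=-x$ gives $|T(u)-T(v)|=|T(x)-T(-x)|=2T(x)$ while $|u-v|=2x$, so $2x$ lies in the set over which the infimum is taken and $\omega_{T}(2T(x))\leq 2x$. This step uses only that $T$ is odd.

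The lower bound is the heart of the matter, and I would reduce it to a one-dimensional rearrangement statement: among all pairs at a fixed distance $d=|u-v|>0$, the increment $|T(u)-T(v)|$ is largest for the interval centered at the origin, i.e.
\[
\max_{|u-v|=d}|T(u)-T(v)|=2T(d/2).
\]
Granting this, let $(u,v)$ be any pair with $|T(u)-T(v)|\geq 2T(x)$ and set $d=|u-v|$. Then $2T(d/2)\geq|T(u)-T(v)|\geq 2T(x)$, so $T(d/2)\geq T(x)$; since $T$ is strictly increasing this forces $d/2\geq x$, that is $|u-v|\geq 2x$. Taking the infimum over all such pairs yields $\omega_{T}(2T(x))\geq 2x$, and together with the upper bound this gives the Lemma.

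To establish the centering inequality I would assume (as we may) that $u=v+d$ and use that $T$ is nondecreasing, so that $|T(u)-T(v)|=T(v+d)-T(v)=\int_{v}^{v+d}T'(t)\,dt=:F(v)$. Because $T$ is odd its derivative is even, $T'(t)=h(|t|)$, and because $T$ is concave on $[0;\infty)$ the map $h$ is nonincreasing. Hence $F'(v)=h(|v+d|)-h(|v|)$ has the sign of $|v|-|v+d|$, so $F'\geq 0$ for $v\leq -d/2$ and $F'\leq 0$ for $v\geq -d/2$; thus $F$ is maximized at $v=-d/2$, where $F(-d/2)=T(d/2)-T(-d/2)=2T(d/2)$, which is exactly the claim. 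The differentiability is unproblematic: a concave function is locally Lipschitz on the interior of its domain, and the finite value $T'(0)$ together with oddness rules out a corner at the origin, so $F$ is absolutely continuous and the sign analysis is valid almost everywhere.

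The main obstacle is this centering inequality; the even, unimodal (peaked at $0$) structure of $T'$, coming from oddness plus concavity on $[0;\infty)$, is precisely what makes the symmetric interval extremal. A second, subtler point is the passage from $T(d/2)\geq T(x)$ to $d/2\geq x$, which requires $T$ to be \emph{strictly} increasing: this holds in every application of the Lemma because $T$ is the monotone transport of $\nu$ onto a fully supported symmetric log-concave measure and is therefore a strictly increasing bijection of $\mathbb{R}$, whereas if $T$ were allowed to be eventually constant the asserted equality could fail (the infimum would then stall at the distance needed to reach the flat part).
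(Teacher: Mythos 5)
Your proof is correct, and its skeleton coincides with the paper's: the upper bound comes from the symmetric pair $u=x$, $v=-x$, and the lower bound from the centering inequality $|T(u)-T(v)|\le 2T(|u-v|/2)$, which is exactly the inequality the paper establishes first. What differs is how that inequality is proved. The paper runs an elementary three-case analysis ($u\ge v\ge 0$, then $0\ge u\ge v$ by oddness, then $u\ge 0\ge v$), comparing slopes of chords of the concave function; it never differentiates anything. You instead write $T(v+d)-T(v)=\int_v^{v+d}T'(t)\,dt$ and use that $T'$ equals a.e.\ $h(|t|)$ with $h$ nonincreasing (oddness plus concavity), so the integral over a window of fixed length $d$ is maximized when the window is centered at the origin --- a rearrangement-style argument. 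Both are sound; yours is more conceptual and isolates the real mechanism (even, unimodal derivative), but it obliges you to justify absolute continuity, which you do correctly ($0\le T'\le T'(0)$ on $[0;\infty)$ makes $T$ globally Lipschitz), whereas the paper's chord argument needs nothing beyond the stated concavity.

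Your closing remark on strict monotonicity is not pedantry but a genuine gap in the lemma as stated, one the paper's own proof shares: in passing from $\inf\{|x'-y'|:2T(|x'-y'|/2)\ge 2T(x)\}$ to $\inf\{|x'-y'|:|x'-y'|\ge 2x\}$ the paper silently uses that $T(y)<T(x)$ for $y<x$. For a nondecreasing $T$ that is eventually constant --- e.g.\ a suitably normalized $T(x)=\mathrm{sgn}(x)\min\{|x|,1\}$, which satisfies all four hypotheses of Proposition \ref{tw:8} --- the conclusion fails for large $x$, since $\omega_T(2T(x))$ stalls at twice the point where $T$ flattens. Nothing breaks downstream because the transport map in Theorem \ref{tw:10} is strictly increasing, but you were right to flag the hypothesis explicitly. (One quibble: that map need not be a bijection of $\mathbb{R}$, as $\mu$ may have bounded support; injectivity is all your argument uses.)
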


\begin{proof}
We will show first that
\begin{align*}
|T(x)-T(y)|\leq2T\Big(\frac{|x-y|}{2}\Big).
\end{align*}
Without loss of generality we may assume that $x\geq y$. \\
If $x\geq y\geq0$ then
\begin{align*}
2T\Big(\frac{|x-y|}{2}\Big)=2T\Big(\frac{x-y}{2}\Big)\geq2\Big(\frac{T(x)}{x}\frac{x-y}{2}\Big)=\frac{T(x)}{x}(x-y)\geq T(x)-T(y)=|T(x)-T(y)|.
\end{align*} 
The first inequality follows from the fact that the graph of the conc$T$ lies above the line passing through points $(0,T(0))$, $(x,T(x))$. The second inequality is a consequence of the that the gradient of the line passing through points $(0,T(0)),(x,T(x))$ is larger than the gradient of the line passing through points $(y,T(y)),(x,T(x))$.\\
If $0\geq x\geq y$, then $-y\geq-x\geq0$, so using the previous case one gets
\begin{align*}
|T(x)-T(y)|=|T(-y)-T(-x)|\leq2T\Big(\frac{|-y+x|}{2}\Big)=2T\Big(\frac{|x-y|}{2}\Big).
\end{align*}
If $x\geq 0\geq y$, then
\begin{align*}
|T(x)-T(y)|=2\Big(\frac{1}{2}T(x)+\frac{1}{2}T(-y)\Big)\leq2T\Big(\frac{x-y}{2}\Big)=2T\Big(\frac{|x-y|}{2}\Big).
\end{align*}
To finish the proof let us observe that
\begin{align*}
\omega_{T}(2T(x))&=\inf\{|x'-y'|: |T(x')-T(y')|\geq2T(x)\}\geq\inf\{|x'-y'|: \  2T\Big(\frac{|x'-y'|}{2}\Big)\geq2T(x)\}
\\
&=\inf\{|x'-y'|:|x'-y'|\geq2x\}\geq2x,
\end{align*}
and the equality holds for $x'=-y'=x$.
\end{proof}


\begin{lem}
\label{lem:8.2}
If $\mu$ is a symmetric, probability measure on $\mathbb{R}$ such that $\int{x^{2}d\mu(x)}=1$ then for $0\leq x\leq 1$ the following holds
\begin{align*}
\Lambda^{*}_{\mu}(x)\leq(\ln(\cosh))^{*}(x)=\frac{1}{2}[(1+x)\ln(1+x))+(1-x)\ln(1-x)]. 
\end{align*}
\end{lem}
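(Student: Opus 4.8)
The plan is to reduce the claimed upper bound on the Cram\'er transform to a pointwise \emph{lower} bound on the log-Laplace transform, exploiting that the Legendre transform reverses the order of functions (Part 4 of the Proposition on Legendre transforms). Concretely, I would show that $\Lambda_{\mu}(t)\geq\ln\cosh(t)$ for every $t\in\mathbb{R}$; combined with order reversal this yields $\Lambda^{*}_{\mu}\leq(\ln\circ\cosh)^{*}$, and it then remains to compute $(\ln\circ\cosh)^{*}$ explicitly and check that on $[0;1]$ it equals the stated expression.

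For the pointwise bound I would first use the symmetry of $\mu$ to rewrite the Laplace transform in terms of $\cosh$:
\begin{align*}
M_{\mu}(t)=\int e^{ty}\,d\mu(y)=\int\cosh(ty)\,d\mu(y).
\end{align*}
Expanding $\cosh(ty)=\sum_{k\geq0}\frac{t^{2k}y^{2k}}{(2k)!}$ and integrating term by term (all summands are nonnegative, so Tonelli applies and the identity is valid in $[0;\infty]$, which also disposes of the case $M_{\mu}(t)=\infty$) gives $M_{\mu}(t)=\sum_{k\geq0}\frac{t^{2k}}{(2k)!}m_{2k}$ with $m_{2k}=\int y^{2k}\,d\mu$. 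By Jensen's inequality applied to the convex function $s\mapsto s^{k}$ and the measure $\mu$,
\begin{align*}
m_{2k}=\int(y^{2})^{k}\,d\mu\geq\Big(\int y^{2}\,d\mu\Big)^{k}=1
\end{align*}
for all $k\geq0$, using the normalization $\int x^{2}\,d\mu=1$. Comparing term by term with $\cosh(t)=\sum_{k\geq0}\frac{t^{2k}}{(2k)!}$ yields $M_{\mu}(t)\geq\cosh(t)$, hence $\Lambda_{\mu}(t)=\ln M_{\mu}(t)\geq\ln\cosh(t)$.

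Finally I would compute $(\ln\circ\cosh)^{*}(x)=\sup_{t}\{xt-\ln\cosh(t)\}$ by differentiation: the supremum is attained where $\tanh(t)=x$, which for $|x|\leq1$ gives $t=\tfrac{1}{2}\ln\frac{1+x}{1-x}$ and $\cosh(t)=(1-x^{2})^{-1/2}$. Substituting and simplifying produces exactly $\frac{1}{2}[(1+x)\ln(1+x)+(1-x)\ln(1-x)]$, matching the asserted formula on $[0;1]$. I do not expect any genuine obstacle here; the one point requiring care is the direction of the Legendre inequality (we need a \emph{lower} bound on $\Lambda_{\mu}$ to get an \emph{upper} bound on $\Lambda^{*}_{\mu}$), and the mild technical point that the term-by-term expansion is justified by nonnegativity rather than by assuming finite moments a priori.
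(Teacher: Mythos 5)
Your proof is correct. Note that the paper does not actually prove this lemma --- it states that Lemmas \ref{lem:8.2} and \ref{lem:8.3} were proved in \cite{LW} --- so there is no in-paper argument to compare against; your argument is the standard one and, in essence, the one from \cite{LW}. All the steps check out: symmetry gives $M_{\mu}(t)=\int\cosh(ty)\,d\mu(y)$; Tonelli justifies the term-by-term expansion even when $M_{\mu}(t)=\infty$; Jensen applied to $s\mapsto s^{k}$ gives $m_{2k}\geq 1$, hence $M_{\mu}\geq\cosh$ and $\Lambda_{\mu}\geq\ln\cosh$; and order reversal of the Legendre transform (Part 4 of the relevant Proposition) together with your explicit computation of $(\ln\cosh)^{*}$ yields the claim. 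Two cosmetic remarks for a final write-up. First, the moment comparison can be condensed into a single application of Jensen: $u\mapsto\cosh(t\sqrt{u})$ is a power series in $u$ with nonnegative coefficients, hence convex on $[0;\infty)$, so $\int\cosh(ty)\,d\mu(y)\geq\cosh\bigl(t\sqrt{\textstyle\int y^{2}d\mu}\bigr)=\cosh(t)$; this is equivalent to what you did. Second, at the endpoint $x=1$ the maximizer $\tanh(t)=x$ has no finite solution; there $(\ln\cosh)^{*}(1)=\lim_{t\to\infty}\bigl(t-\ln\cosh(t)\bigr)=\ln 2$, which matches the stated formula under the convention $0\ln 0=0$, so the claim on all of $[0;1]$ does hold, but this case deserves its own sentence rather than being folded into the differentiation argument.
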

\ \\

\begin{lem}
\label{lem:8.3}
If $\mu$ is a symmetric, probability measure on $\mathbb{R}$ then
$\Lambda^{*}_{\mu}(x)\leq-\ln(\mu[x;\infty))$. 
\end{lem}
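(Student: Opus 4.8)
The plan is to deduce the bound from the exponential Markov (Chernoff) inequality, which is the natural device linking the Laplace transform $M_\mu$ to tail probabilities. Recall that $\Lambda^*_\mu(x)=\sup_{t\in\mathbb{R}}\{tx-\Lambda_\mu(t)\}$ with $\Lambda_\mu=\ln M_\mu$, so it suffices to bound each term $tx-\Lambda_\mu(t)$ by $-\ln\mu[x;\infty)$ and then pass to the supremum.

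First I would establish the tail estimate: for every $t\ge 0$ and $x\ge 0$,
\begin{align*}
\mu[x;\infty)\,e^{tx}\le\int_{[x;\infty)}e^{ty}\,d\mu(y)\le\int_{\mathbb{R}}e^{ty}\,d\mu(y)=M_\mu(t),
\end{align*}
the first inequality because $e^{ty}\ge e^{tx}$ on $[x;\infty)$ when $t\ge 0$, and the second because $e^{ty}\ge 0$ everywhere. Taking logarithms gives $\ln\mu[x;\infty)\le\Lambda_\mu(t)-tx$, that is $tx-\Lambda_\mu(t)\le-\ln\mu[x;\infty)$ for every $t\ge 0$.

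The main (though short) point is to justify that, for $x\ge 0$, the supremum defining $\Lambda^*_\mu(x)$ is already attained among $t\ge 0$, so that the pointwise bound upgrades to a bound on the full supremum. Here symmetry enters: since $\mu=\mu'$ one has $M_\mu(t)=\int\cosh(ty)\,d\mu(y)\ge 1$, hence $\Lambda_\mu(t)\ge 0$ for every $t$ (recall also $\Lambda_\mu(0)=0$). Consequently, for $x\ge 0$ and $t<0$ the quantity $tx-\Lambda_\mu(t)\le 0$, a value already realized at $t=0$; therefore $\Lambda^*_\mu(x)=\sup_{t\ge 0}\{tx-\Lambda_\mu(t)\}$. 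Combining this with the tail estimate yields $\Lambda^*_\mu(x)\le-\ln\mu[x;\infty)$ for $x\ge 0$, which is the range in which the bound is subsequently applied (for $x<0$ the left side is even while the right side is not, so the inequality is only meaningful, and is used, on $[0;\infty)$).
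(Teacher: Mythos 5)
Your proof is correct. Note that for this lemma the paper offers no argument of its own: the text before Lemma \ref{lem:8.1} states that Lemmas \ref{lem:8.2} and \ref{lem:8.3} were proved in \cite{LW}, so your write-up fills in a proof the paper delegates to the literature. What you do is the standard exponential Chebyshev (Chernoff) argument: $\mu[x;\infty)e^{tx}\le M_{\mu}(t)$ for $t\ge 0$, hence $tx-\Lambda_{\mu}(t)\le-\ln\mu[x;\infty)$, and then the supremum defining $\Lambda^{*}_{\mu}(x)$ is reduced to $t\ge 0$ via the symmetry consequence $M_{\mu}(t)=\int\cosh(ty)\,d\mu(y)\ge 1$, i.e.\ $\Lambda_{\mu}\ge 0$. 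Two details in your version deserve credit: using the $\cosh$ identity rather than Jensen's inequality avoids assuming $\mu$ has a (finite) first moment; and you correctly observe that the inequality as literally stated can only be asserted for $x\ge 0$ --- the left side is even while the right side is not, and for instance for the symmetric exponential measure and large negative $x$ it genuinely fails --- which is consistent with how the lemma is invoked in the proof of Proposition \ref{tw:8}, namely at the nonnegative argument $T(x/\beta)$. A mild variant that handles all $t\in\mathbb{R}$ in one stroke, exploiting symmetry through both tails, is $M_{\mu}(t)\ge\int_{[x;\infty)}e^{ty}\,d\mu+\int_{(-\infty;-x]}e^{ty}\,d\mu\ge\mu[x;\infty)\left(e^{tx}+e^{-tx}\right)\ge\mu[x;\infty)e^{|t|x}$, whence $tx-\Lambda_{\mu}(t)\le|t|x-|t|x-\ln\mu[x;\infty)$ for $x\ge0$; this is equivalent in substance to your reduction to $t\ge 0$ and buys nothing essential over it.
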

\ \\

\begin{lem}
\label{lem:8.4}
Cramer transform of the symmetric exponential distribution $\nu$ satisfies for $0\leq x\leq 1$ 
\begin{align*}
\Lambda^{*}_{\nu}(\theta x)\geq(\ln(\cosh))^{*}(x), 
\end{align*}
where $\theta>0$ is such that
$\Lambda^{*}_{\nu}(\theta)=(\ln(\cosh))^{*}(1)=\ln2$.
\end{lem}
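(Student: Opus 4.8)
The plan is to prove the convexity inequality $\Lambda^{*}_{\nu}(\theta x)\geq(\ln\cosh)^{*}(x)$ on $[0,1]$ by reducing the estimate at the interior points to the two endpoints $x=0$ and $x=1$, where it holds by construction. I would first record the explicit form of the right-hand side from Lemma~\ref{lem:8.2}, namely
\begin{align*}
\psi(x):=(\ln\cosh)^{*}(x)=\tfrac{1}{2}\big[(1+x)\ln(1+x)+(1-x)\ln(1-x)\big],
\end{align*}
and observe that $\psi(0)=0=\Lambda^{*}_{\nu}(0)$ while $\psi(1)=\ln 2=\Lambda^{*}_{\nu}(\theta)$ by the defining property of $\theta$. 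Thus the inequality is an equality (or better) at both endpoints, and the goal is to show it persists in between.

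The key structural fact I would exploit is convexity. Define $\Phi(x):=\Lambda^{*}_{\nu}(\theta x)$, which is convex in $x$ since $\Lambda^{*}_{\nu}$ is convex (Proposition on Cramer transforms) and $x\mapsto\theta x$ is linear. The difference $\Phi-\psi$ vanishes at $0$ and $1$, so it suffices to show that $\psi$ is ``more convex'' than $\Phi$ near the relevant range, i.e.\ that $\Phi-\psi$ has no interior maximum that pushes it below zero. First I would compute both second derivatives. Using $\Lambda^{*}_{\nu}(t)=\sqrt{t^{2}+1}-1-\ln\frac{\sqrt{t^{2}+1}+1}{2}$ from Theorem~\ref{tw:5}, one finds $(\Lambda^{*}_{\nu})'(t)=t/(\sqrt{t^{2}+1}+1)$ and hence $\Phi''(x)=\theta^{2}(\Lambda^{*}_{\nu})''(\theta x)$, while $\psi''(x)=\frac{1}{1-x^{2}}$ blows up as $x\to 1^{-}$. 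The comparison of second derivatives then drives the argument.

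The cleanest route is the following: since $\Phi$ and $\psi$ agree at the endpoints $0$ and $1$, and $\psi$ is convex with $\psi''(x)\to\infty$ as $x\to 1$, I would argue that $\psi$ lies below any convex function agreeing with it at the endpoints provided a single comparison of slopes or a sign analysis of $(\Phi-\psi)''$ is controlled. Concretely, I would show that $h:=\Phi-\psi$ satisfies $h(0)=h(1)=0$ and that $h''$ changes sign at most once on $(0,1)$ — passing from positive to negative is impossible for a function forced to return to $0$, so the admissible sign pattern forces $h\geq 0$. To make this rigorous I would verify that $\Phi''-\psi''$ is first nonnegative and then nonpositive (a single crossing), which by a standard convexity-comparison lemma yields $h\geq 0$ on the whole interval. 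The sign of $\Phi''-\psi''=\theta^{2}(\Lambda^{*}_{\nu})''(\theta x)-\frac{1}{1-x^{2}}$ can be analyzed by clearing denominators and reducing to a polynomial-type inequality in $x$ and $\theta$.

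The main obstacle I anticipate is exactly this single-crossing (or sign-change) analysis of $\Phi''-\psi''$: because $\psi''$ diverges at $x=1$ while $\Phi''$ stays bounded, the difference is certainly negative near $x=1$, and the real work is confirming it is positive for small $x$ and crosses zero only once, which requires a careful monotonicity argument on the ratio of the two second derivatives together with the explicit value of $\theta$ determined by $\Lambda^{*}_{\nu}(\theta)=\ln 2$. An alternative, possibly more robust, approach that sidesteps delicate second-derivative estimates is to use Lemma~\ref{lem:8.3}: bound $\Lambda^{*}_{\nu}(\theta x)$ from below by comparing against $-\ln\nu[\theta x;\infty)=\theta x-\ln 2$ and check directly that the resulting affine lower bound dominates $\psi(x)$ on $[0,1]$, using convexity of $\psi$ and the endpoint matching. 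If the affine bound is too weak in the interior, I would fall back to the convexity/single-crossing argument above, which I expect to be the decisive one.
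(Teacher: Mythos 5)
Your main route has the same skeleton as the paper's proof: set $h(x)=\Lambda^{*}_{\nu}(\theta x)-(\ln\cosh)^{*}(x)$, note $h(0)=h(1)=0$, and show that $h''$ changes sign exactly once on $(0,1)$, from positive to negative. However, the step you call a ``standard convexity-comparison lemma'' is false as you state it: the hypotheses ``$h(0)=h(1)=0$ and $h''$ changes sign once from $+$ to $-$'' do \emph{not} force $h\geq0$. Counterexample: $h(x)=-\sin(2\pi x)$ satisfies $h(0)=h(1)=0$, and $h''(x)=4\pi^{2}\sin(2\pi x)$ is positive on $(0,1/2)$ and negative on $(1/2,1)$, yet $h<0$ on $(0,1/2)$. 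The missing ingredient --- which does hold here and which the paper's proof uses explicitly --- is the derivative condition $h'(0)=0$: both $x\mapsto\Lambda^{*}_{\nu}(\theta x)$ and $(\ln\cosh)^{*}$ have vanishing derivative at $0$, since $(\Lambda^{*}_{\nu})'(t)=t/(\sqrt{t^{2}+1}+1)$ and $\big((\ln\cosh)^{*}\big)'(x)=\tfrac12\ln\tfrac{1+x}{1-x}$. With this the argument closes exactly as in the paper: on $[0,x_{0}]$ (where $h''\geq0$) convexity together with $h(0)=h'(0)=0$ gives $h'\geq0$, hence $h\geq0$ there; on $[x_{0},1]$ concavity puts $h$ above the chord joining $(x_{0},h(x_{0}))$ and $(1,0)$, whose endpoint values are nonnegative. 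You computed the relevant derivatives but never recorded $h'(0)=0$, and your one-line justification (``passing from positive to negative is impossible for a function forced to return to $0$'') is not a proof; this is the decisive gap. Note also that the single-crossing claim itself still requires verification, which the paper carries out by first checking $\theta>\sqrt{2}$ (from $\Lambda^{*}_{\nu}(\theta)=\ln2>\Lambda^{*}_{\nu}(\sqrt{2})$) and then exhibiting the crossing point $x_{0}=\sqrt{(4\theta^{2}-3-\sqrt{8\theta^{2}+9})/(8\theta^{2})}$ of $h''(x)=\theta^{2}/(\sqrt{1+\theta^{2}x^{2}}+1+\theta^{2}x^{2})-1/(1-x^{2})$.

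Your fallback approach cannot be repaired: Lemma \ref{lem:8.3} gives an \emph{upper} bound, $\Lambda^{*}_{\nu}(y)\leq-\ln\nu[y;\infty)$, so it can never be used to bound $\Lambda^{*}_{\nu}(\theta x)$ from below. (As a side issue, $-\ln\nu[\theta x;\infty)=\theta x+\ln2$, not $\theta x-\ln2$; and an affine function equal to $-\ln 2$ at $x=0$ could not dominate $(\ln\cosh)^{*}$ there in any case, since $(\ln\cosh)^{*}(0)=0$.) So the first route, amended by the observation $h'(0)=0$, is the one that works --- and it is then precisely the paper's proof.
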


\begin{proof}
For $0\leq x\leq1$ define 
\begin{align*}
H(x)&=\Lambda_{\nu}^{*}(\theta x)-(\ln(\cosh))^{*}(x)
\\
&=\sqrt{1+\theta^{2}x^{2}}-1-\ln\Big(\frac{\sqrt{1+\theta^{2}x^2}+1}{2}\Big)-\frac{1}{2}[(1+x)\ln(1+x)+(1-x)\ln(1-x)].
\end{align*}
From standard calculations we get
\begin{align*}
H'(x)=\frac{\theta^{2}x}{\sqrt{1+\theta^{2}x^2}+1}-\frac{1}{2}\ln\Big(\frac{1+x}{1-x}\Big),\quad 
H''(x)=\frac{\theta^{2}}{\sqrt{1+\theta^{2}x^2}+1+\theta^{2}x^2}-\frac{1}{1-x^2}.
\end{align*}
Since $\theta>\sqrt{2}$ (because $\ln2=\Lambda^{*}_{\nu}(\theta)>\Lambda^{*}_{\nu}(\sqrt{2})$) we check that
$H''(x)>0$ for $x\in[0;x_{0})$ and $H''(x)<0$ for $x\in(x_{0},1)$, where $x_{0}=\sqrt{\frac{4\theta^{2}-3-\sqrt{8\theta^{2}+9}}{8\theta^{2}}}$. Since $H(0)=H'(0)=0$ we conlcude that $H$ is increasing on $[0;x_{0}]$, in particular $H\geq0$ on $[0;x_{0})$. Inequality $H\geq 0$ on $[x_{0};1]$ follows from $H(x_{0})\geq0$, $H(1)=0$ and concavity of $H$ on $[x_{0};1]$.  
\end{proof}

\begin{proof}[Proof of Proposition \ref{tw:8}]
Using Part 2 of Theorem \ref{tw:5} the pair $(\nu,\Lambda^{*}_{\overline{\nu}}(\cdot/2))$ has property $(\tau)$. Hence using Part 5 of Proposition \ref{f:2} the pair $(T_{\#}\nu,\Lambda^{*}_{\overline{\nu}}(\omega_{T}(|\cdot|)/2))$ has property $(\tau)$. To finish the proof it is enough to show that for $\beta=2c\delta$ there is
\begin{displaymath}
\Lambda^{*}_{\overline{T_{\#}\nu}}\Big(\frac{y}{\beta}\Big)\leq\Lambda^{*}_{\overline{\nu}}\Big(\frac{\omega_{T}(|y|)}{2}\Big).
\end{displaymath}
Since functions which are present in the above inequality are even we can assume without loss of generality that $y\geq0$. Due to the symmetry of measures
 $\nu$ and $T_{\#}\nu$ the inequality is equivalent to
\begin{displaymath}
\Lambda^{*}_{T_{\#}\nu}\Big(\frac{y}{2\beta}\Big)\leq\Lambda^{*}_{\nu}\Big(\frac{\omega_{T}(y)}{4}\Big).
\end{displaymath}
Let us observe that if $y\notin 2T(\mathbb{R})=\{2T(x):x\in\mathbb{R}\}$, then 
$\{(x',y'):|T(x')-T(y')|\geq y\}=\emptyset$ hence $\omega_{T}(y)=\infty$ and the inequality is true. If $y\in 2T(\mathbb{R})$ then $y=2T(x)$, so due to Lemma \ref{lem:8.1} it is enough to show that for $x\geq0$ the following inequality holds
\begin{displaymath}
\Lambda^{*}_{T_{\#}\nu}\Big(\frac{T(x)}{\beta}\Big)\leq\Lambda^{*}_{\nu}\Big(\frac{\omega_{T}(2T(x))}{4}\Big)=\Lambda^{*}_{\nu}\Big(\frac{x}{2}\Big).
\end{displaymath}
We consider two cases\\
\ \\
\textbf{Case 1.} ($0\leq\frac{cx}{\beta}\leq1$)\\
Using concavity of $T$ on $[0;\infty)$ and $T(0)=0$ we have
\begin{align*}
\frac{T(x)}{\beta}=\frac{1}{\beta}T(x)+\Big(1-\frac{1}{\beta}\Big)T(0)\leq T\Big(\frac{x}{\beta}\Big)\leq c\frac{x}{\beta},
\end{align*}
thus using Lemma \ref{lem:8.2} and Lemma \ref{lem:8.4} we get
\begin{align*}
\Lambda^{*}_{T_{\#}\nu}\Big(\frac{T(x)}{\beta}\Big)\leq\Lambda^{*}_{T_{\#}\nu}\Big(\frac{cx}{\beta}\Big)\leq(\ln\cosh)^{*}\Big(\frac{cx}{\beta}\Big)\leq \Lambda^{*}_{\nu}\Big(\theta\frac{cx}{\beta}\Big)\leq \Lambda^{*}_{\nu}\Big(\frac{x}{2}\Big),
\end{align*}
since 
\begin{align*}
\theta\frac{c}{\beta}=\frac{\theta}{2\delta}\leq\frac{1}{2}.
\end{align*}
\textbf{Case 2.} ($\frac{cx}{\beta}\geq1$)\\
From the fact that $\Lambda^{*}_{T_{\#}\nu}$ is nondecreasing on $[0;\infty)$ (because it is even and convex) and Lemma \ref{lem:8.3} we have
\begin{align*}
\Lambda^{*}_{T_{\#}\nu}\Big(\frac{T(x)}{\beta}\Big)&\leq\Lambda^{*}_{T_{\#}\nu}\Big(T\Big(\frac{x}{\beta}\Big)\Big)\leq h_{T_{\#}\nu}\Big(T\Big(\frac{x}{\beta}\Big)\Big)=-\ln\Big(T_{\#}\nu\Big[T\Big(\frac{x}{\beta}\Big);\infty\Big)\Big)\\
&=-\ln\Big(\nu\Big[\frac{x}{\beta};\infty\Big)\Big)
=\frac{x}{\beta}+\ln2.
\end{align*}
To finish the proof it suffices to show that for $x\geq\frac{\beta}{c}$
\begin{align}
\frac{x}{\beta}+\ln2\leq \Lambda^{*}_{\nu}\Big(\frac{x}{2}\Big).\label{ineq}
\end{align}
Denote $a(x)=\frac{x}{\beta}+\ln2$ and $b(x)=\Lambda^{*}_{\nu}\Big(\frac{x}{2}\Big)$. Then \eqref{ineq} is a consequence of the fact that $a$ is an affine function, $b$ is convex and increasing and 
\begin{align*}
a(0)=\ln2>0=b(0),\quad a(\beta/c)=1/c+\ln2=\Lambda_{\nu}^{*}(\delta)=b(\beta/c).
\end{align*}
\end{proof}


\begin{defi}[Logarithmically concave measure]
\label{def:7}
We call a measure $\mu$ on $\mathbb{R}^{n}$ \em{logarithmically concave} (\em{log-concave}) if for any nonempty compact sets $A,B$ and $t\in[0;1]$,
\begin{align*}
\mu(tA+(1-t)B)\geq\mu(A)^{t}\mu(B)^{1-t}.
\end{align*}
\end{defi}

The following Proposition (see \cite{Bor}) gives a full characterisation of log-concave measures with a fully dimensional support.\\

\begin{prop}
\label{tw:9}
A measure $\mu$ on $\mathbb{R}^{n}$ with fully dimenstional support (i.e. there does not exist a proper affine subspace cotaining the support of the measure) is log-concave if and only if it is absolutely continuous with respect to the Lebesgue measure and has a log-concave density ($g_{\mu}(x)=e^{-W(x)}$, where $W\colon\mathbb{R}^n\to(-\infty;\infty]$ is convex). 
\end{prop}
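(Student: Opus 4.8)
The statement is an equivalence, and I would prove its two implications separately. The backward implication (a log-concave density yields a log-concave measure) is elementary and rests on the Prékopa--Leindler inequality, while the forward implication (a log-concave measure with full-dimensional support is absolutely continuous with a log-concave density) is the substantial part, due to Borell; there the genuine obstacle is establishing absolute continuity.

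For the backward direction, assume $g_{\mu}=e^{-W}$ with $W$ convex. Given nonempty compact sets $A,B$ and $t\in[0;1]$, set $f_{0}=g_{\mu}\mathbb{I}_{A}$, $h_{0}=g_{\mu}\mathbb{I}_{B}$ and $\ell_{0}=g_{\mu}\mathbb{I}_{tA+(1-t)B}$. For $x\in A,\ y\in B$ the convexity of $W$ gives $g_{\mu}(tx+(1-t)y)\geq g_{\mu}(x)^{t}g_{\mu}(y)^{1-t}$, hence $\ell_{0}(tx+(1-t)y)\geq f_{0}(x)^{t}h_{0}(y)^{1-t}$, the right-hand side being $0$ whenever $x\notin A$ or $y\notin B$. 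The Prékopa--Leindler inequality then yields
\[
\mu(tA+(1-t)B)=\int \ell_{0}\,dx\geq\Big(\int f_{0}\,dx\Big)^{t}\Big(\int h_{0}\,dx\Big)^{1-t}=\mu(A)^{t}\mu(B)^{1-t}.
\]

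For the forward direction I would first record that the support $K=\mathrm{supp}\,\mu$ is convex: if $A,B\subseteq K$ are compact with positive measure then the defining inequality forces $\mu(tA+(1-t)B)>0$, so $tK+(1-t)K\subseteq K$, and full-dimensionality gives that $K$ has nonempty interior. The crux is then absolute continuity. The strategy is a \emph{shrinking comparison}: fix an interior point $x_{0}$ and a reference ball $B_{0}=x_{0}+B_{2}(r)\subseteq K$ with $\mu(B_{0})=:c>0$; for compact $E$, the inequality with $A=E$, $B=B_{0}$ gives $\mu(tE+(1-t)B_{0})\geq\mu(E)^{t}c^{1-t}$, where $tE+(1-t)B_{0}$ is a translate of $tE$ thickened by the radius $(1-t)r\to0$, while the lower bound tends to $\mu(E)$ as $t\to1$. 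In dimension one this already rules out atoms and, more generally, any singular part: mass cannot pile on a collapsing interval since the estimate keeps $\mu((p,p+\delta))$ bounded below while $(p,p+\delta)\downarrow\varnothing$. The delicate point, and the main obstacle, is upgrading these one-sided estimates to a genuine proof that $\mu$ charges no Lebesgue-null set. Here I would reduce to the line by linear pushforwards, which stay log-concave because $tT^{-1}(S_{1})+(1-t)T^{-1}(S_{2})\subseteq T^{-1}(tS_{1}+(1-t)S_{2})$ together with monotonicity of $\mu$ (after a routine compact approximation of the cylindrical preimages), settle the one-dimensional case completely, and lift back by integrating over a fixed direction via Fubini, using convexity of $K$ to control the sections.

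Once $\mu$ is known to have a density $g$, log-concavity of $g$ follows by applying the defining inequality to small balls: with $A=x+B_{2}(\varepsilon)$ and $B=y+B_{2}(\varepsilon)$ one has $tA+(1-t)B=tx+(1-t)y+B_{2}(\varepsilon)$, so dividing by $\mathrm{Leb}(B_{2}(\varepsilon))$ and letting $\varepsilon\to0$, the Lebesgue differentiation theorem gives $g(tx+(1-t)y)\geq g(x)^{t}g(y)^{1-t}$ for almost every $x,y$. A standard cleanup then produces a genuinely log-concave representative, equivalently a convex $W$ with $g=e^{-W}$, which completes the proof.
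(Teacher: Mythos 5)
The paper itself offers no proof of this Proposition to compare against: it is quoted directly from Borell's paper \cite{Bor}, so your attempt has to stand on its own. Your backward implication via Pr\'ekopa--Leindler is correct, and so is the closing step (recovering a log-concave density from small-ball averages once absolute continuity is known). The forward implication, however, has a genuine gap, and it sits exactly where you yourself flag the main obstacle. Your one-dimensional argument does not actually settle the one-dimensional case: the shrinking comparison $\mu(tE+(1-t)B_{0})\geq\mu(E)^{t}\mu(B_{0})^{1-t}$ rules out atoms only. A singular \emph{continuous} measure (a Cantor-type measure, say) puts no mass on any interval collapsing to a fixed point, so ``mass piling on a collapsing interval'' never occurs; all your comparison produces is a family of sets of arbitrarily small Lebesgue measure carrying $\mu$-mass bounded below, which contradicts nothing unless absolute continuity is already known (the Cantor measure itself has exactly this property). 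A correct one-dimensional argument is different: $F(x)=\mu((-\infty;x])$ satisfies $F(tx+(1-t)y)\geq F(x)^{t}F(y)^{1-t}$ because $(-\infty;tx+(1-t)y]\supseteq t(-\infty;x]+(1-t)(-\infty;y]$, so $\ln F$ is concave, hence locally Lipschitz on the interior of $\{F>0\}$; the same applied to $x\mapsto\mu([x;\infty))$ gives local Lipschitz continuity of $F$ on the interior of the support and excludes endpoint atoms, whence $\mu$ is absolutely continuous.

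Second, and more seriously, the lift to dimension $n$ fails as described. The implication ``every linear pushforward of $\mu$ onto $\mathbb{R}$ is absolutely continuous, therefore $\mu$ is absolutely continuous'' is false: the uniform probability measure on the unit circle in $\mathbb{R}^{2}$ is singular, yet every one of its one-dimensional projections is absolutely continuous (an arcsine-type law). So after reducing to the line you must invoke log-concavity in dimension $n$ again, and ``integrating over a fixed direction via Fubini'' does not do this: Fubini controls Lebesgue sections, whereas what you would need is control of the \emph{conditional} measures of $\mu$ on lines in a fixed direction, and nothing in the set inequality gives their absolute continuity (or log-concavity) before the theorem is proved --- the uniform measure on a segment in the plane, whose conditionals on perpendicular lines are point masses, shows the set inequality alone does not control conditionals, and your appeal to ``convexity of $K$'' does not explain where full-dimensionality enters. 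A route that genuinely uses it: for $r>0$ set $h_{r}(x)=\mu(x+B_{2}(r))/\mathrm{Leb}(B_{2}(r))$; since $t(x+B_{2}(r))+(1-t)(y+B_{2}(r))=tx+(1-t)y+B_{2}(r)$, each $h_{r}$ is a log-concave function of $x$. By the Besicovitch differentiation theorem, $h_{r}\to\infty$ as $r\to0$ at $\mu_{s}$-almost every point (where $\mu_{s}$ is the singular part), while $h_{r}$ tends to a finite limit at Lebesgue-almost every point. If $\mu_{s}\neq0$, the reflection inequality $h_{r}(x_{1})^{2}\geq h_{r}(x_{0})h_{r}(2x_{1}-x_{0})$ first forces the absolutely continuous part to vanish, and then full-dimensionality of the support yields $n+1$ affinely independent blow-up points, so log-concavity propagates $h_{r}\to\infty$ to the whole simplex they span --- a set of positive Lebesgue measure --- contradicting the finite Lebesgue-a.e.\ limit. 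An argument of this kind (in essence Borell's) is what your sketch is missing.
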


The next Theorem was proved in \cite{LW}. Our proof improves the constant significantly.\\

\begin{theo}
\label{tw:10}
Every symmetric, product, log-concave probability measure on $\mathbb{R}^n$ with fully dimensional support satisfies $IC(C)$ with a universal constant $C=2\sqrt{3}\delta\approx9.61929\ldots$, where $\delta>0$ is such that $\Lambda^{*}_{\nu}(\delta)=\ln2+1/\sqrt{3}$.
\end{theo}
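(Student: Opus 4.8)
The plan is to reduce to the one-dimensional case and realize each factor as a pushforward of $\nu$ by a map fitting the hypotheses of Proposition \ref{tw:8}. Since the measure is a symmetric product, it factors as $\mu=\mu_1\otimes\cdots\otimes\mu_n$ with each $\mu_i$ a symmetric probability measure on $\mathbb{R}$; fully dimensional support forces each $\mu_i$ to have nondegenerate support, and log-concavity of the product is equivalent to log-concavity of each factor, so by Proposition \ref{tw:9} each $\mu_i$ has a log-concave density $g_i$. By Part 2 of Proposition \ref{fakt:6} it suffices to prove that a single one-dimensional symmetric log-concave measure satisfies $\mathrm{IC}(C)$ with the stated universal $C$. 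By Part 1 of Proposition \ref{fakt:6} (affine invariance, in particular under dilations) I may rescale so that $\int x^2\,d\mu_i=1$.

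Next I would construct the monotone transport map $T$ pushing $\nu$ onto the normalized $\mu_i$, namely $T=G^{-1}\circ F$ where $F,G$ are the distribution functions of $\nu,\mu_i$, and verify the four hypotheses of Proposition \ref{tw:8}. Oddness of $T$ follows from the symmetry of both measures, monotonicity is automatic, and the second-moment normalization is condition 4 by construction. The two substantive points are conditions 2 and 3. Matching survival functions gives $-\ln(2\bar G(T(x)))=x$ for $x\ge0$, so $T=\psi^{-1}$ with $\psi'(y)=g_i(y)/\bar G(y)=:h(y)$ the hazard rate of $\mu_i$. The fact that a log-concave density has nondecreasing hazard rate (which follows from $h\ge(-\ln g_i)'$, itself a consequence of the tangent-line bound $g_i(t)\le g_i(y)e^{-(-\ln g_i)'(y)(t-y)}$ for $t\ge y$) makes $\psi$ convex and increasing, hence $T=\psi^{-1}$ concave and increasing on $[0;\infty)$. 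Finiteness of $T'(0)$ reduces to $g_i(0)>0$, which holds since $0$ lies in the interior of the support of a symmetric log-concave measure.

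The crux is to bound $T'(0)$ by a universal constant, since the output in Proposition \ref{tw:8} is $\mathrm{IC}(2c\delta)$ with $c\ge T'(0)$. From $T'(0)=\bar G(0)/g_i(0)=1/(2g_i(0))$ I need a universal lower bound on the peak $g_i(0)=\max g_i$ of a variance-one symmetric log-concave density. I would prove $g_i(0)\ge \frac{1}{2\sqrt3}$ by comparison with the uniform law: set $M=g_i(0)$ and $u=M\,\mathbb{I}_{[-1/(2M),1/(2M)]}$, which has the same peak and total mass $1$. Since $g_i$ is even and unimodal with mode $0$, the difference $g_i-u$ is nonpositive on $[0,1/(2M)]$ and nonnegative beyond it, so the single-crossing identity $\int_0^\infty x^2(g_i-u)=\int_0^\infty \big(x^2-(2M)^{-2}\big)(g_i-u)\ge0$ yields $1=\mathrm{Var}(\mu_i)\ge\mathrm{Var}(u)=\frac{1}{12M^2}$, i.e. $M\ge\frac{1}{2\sqrt3}$. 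Hence $T'(0)\le\sqrt3$, and I may take $c=\sqrt3$. Applying Proposition \ref{tw:8} gives $\mathrm{IC}(2\sqrt3\,\delta)$ with $\Lambda^*_\nu(\delta)=\ln2+1/\sqrt3$, and Part 2 of Proposition \ref{fakt:6} lifts this to the product. The main obstacle is the pair of structural facts about log-concave densities: monotonicity of the hazard rate (for concavity of $T$) and the sharp peak bound $g_i(0)\ge\frac{1}{2\sqrt3}$ (for the precise constant); the single-crossing argument for the latter, sharp on the uniform, is exactly what pins down $c=\sqrt3$ and therefore $C=2\sqrt3\,\delta$.
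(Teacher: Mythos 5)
Your proposal is correct and follows essentially the same route as the paper: reduction to the one-dimensional isotropic case via Proposition \ref{fakt:6}, transport of $\nu$ onto the factor by the increasing rearrangement $T=F_{\mu}^{-1}\circ F_{\nu}$, and application of Proposition \ref{tw:8} with $c=\sqrt{3}$, the constant coming from the peak bound $g_{\mu}(0)\geq\frac{1}{2\sqrt{3}}$ (the paper's Proposition \ref{l:10.1}, which you reprove by a single-crossing comparison with the uniform density instead of the paper's integration-by-parts argument). Your hazard-rate argument for the concavity of $T$ on $[0;\infty)$ is a useful addition, since the paper asserts this property of the rearrangement without proof.
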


To prove Theorem \ref{tw:10} we will use the following Proposition which is a modification of the result obtained by Hensley (see \cite{Hen}).\\

\begin{prop}
\label{l:10.1}
If $g\colon\mathbb{R}\to[0;\infty)$ is even, nonincreasing on $[0;\infty)$ and satisfies 
\begin{enumerate}
\item $\int{g(x)dx}=1$,
\item $\int{x^{2}g(x)dx}=1,$ 
\end{enumerate}
then $g(0)\geq\frac{1}{2\sqrt{3}}$.
\end{prop}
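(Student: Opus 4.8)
The plan is to first locate the extremizer in order to see where the constant $\tfrac{1}{2\sqrt{3}}$ comes from. Among even densities that are nonincreasing on $[0;\infty)$ and have second moment $1$, the natural candidate for minimizing the value at the origin is the uniform density on a symmetric interval $[-a;a]$. For such a density $g(0)=\tfrac{1}{2a}$ and $\int x^{2}g = \tfrac{a^{2}}{3}$, so condition 2 forces $a=\sqrt{3}$ and hence $g(0)=\tfrac{1}{2\sqrt{3}}$. This strongly suggests proving the inequality by a layer-cake argument that expresses both constraints through the distribution function of $g$ and then compares against uniform densities.

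Next I would pass to that distribution function. Put $h=g(0)=\sup g$ (which is finite, since $g$ takes values in $[0;\infty)$) and for $0<t<h$ set $L(t)=|\{x: g(x)>t\}|$. Because $g$ is even and nonincreasing on $[0;\infty)$, each superlevel set $\{g>t\}$ is, up to a null set, the symmetric interval $(-L(t)/2;L(t)/2)$, and $L$ is nonincreasing in $t$. The layer-cake formula then rewrites both hypotheses as integrals against powers of $L$: from condition 1 one gets $\int_{0}^{h} L(t)\,dt = 1$, while using $\int_{-r}^{r}x^{2}\,dx = \tfrac{2}{3}r^{3}$ with $r=L(t)/2$ turns condition 2 into $\int_{0}^{h} \tfrac{1}{12}L(t)^{3}\,dt = 1$, that is $\int_{0}^{h}L(t)^{3}\,dt = 12$.

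Finally I would apply Jensen's inequality (equivalently, the power-mean inequality) with the convex function $s\mapsto s^{3}$ and the normalized measure $\tfrac{dt}{h}$ on $[0;h]$, obtaining $\tfrac{1}{h}\int_{0}^{h}L^{3} \ge \bigl(\tfrac{1}{h}\int_{0}^{h}L\bigr)^{3}$, i.e. $\tfrac{12}{h}\ge \tfrac{1}{h^{3}}$. This yields $12h^{2}\ge 1$, hence $g(0)=h\ge\tfrac{1}{2\sqrt{3}}$, with equality exactly when $L$ is constant, i.e. precisely for the uniform density found above. The only delicate point is the bookkeeping in the layer-cake step, namely justifying the interchange of integration by Tonelli and checking that the superlevel sets really are symmetric intervals of measure $L(t)$; this is routine, and once both constraints are written through $L$ the whole estimate collapses to a single application of convexity.
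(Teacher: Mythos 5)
Your proof is correct, and it takes a genuinely different route from the paper's. The paper fixes the central value $c=g(0)$ and solves a variational problem: it defines the class $A(c)$ of even densities, nonincreasing on $[0;\infty)$, with $g(0)=c$, and shows that the uniform density $u=c\,\mathbb{I}_{[-1/(2c);1/(2c)]}$ minimizes the second moment in this class. The mechanism there is integration by parts, $\int x^{2}g\,dx=4\int_{0}^{\infty}x\bigl(\tfrac{1}{2}-\int_{0}^{x}g(s)\,ds\bigr)dx$, combined with the pointwise domination $\int_{0}^{x}g\leq\int_{0}^{x}u$ (valid since $g\leq g(0)=c$ and $\int_{0}^{\infty}g=\tfrac12$), which yields the explicit minimal value $m(c)=\tfrac{1}{12c^{2}}$ and hence the bound. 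You instead encode \emph{both} constraints through the layer function $L(t)=|\{g>t\}|$: the layer-cake formula turns the mass condition into $\int_{0}^{h}L=1$ and the moment condition into $\int_{0}^{h}L^{3}=12$ (using that superlevel sets are symmetric intervals), after which a single application of Jensen's inequality with $s\mapsto s^{3}$ gives $12h^{2}\geq1$. The two arguments are dual in flavor — the paper slices vertically and compares partial integrals against the extremizer, you slice horizontally and let convexity do the comparison implicitly. Your version is somewhat more compact, needs no explicit competitor in the proof itself, and hands you the equality case ($L$ constant, i.e.\ the uniform density on $[-\sqrt{3};\sqrt{3}]$) for free; the paper's version gives the slightly stronger, quantified output $m(c)=\tfrac{1}{12c^{2}}$ for every $c$, i.e.\ the full solution of the constrained minimization problem rather than only the bound at the constraint value. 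The routine points you flag (Tonelli, symmetry and finiteness of $L(t)$, $h=\sup g=g(0)<\infty$) are indeed unproblematic: $L(t)\leq 1/t$ by Chebyshev, and $h>0$ since otherwise $g\equiv0$.
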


\begin{proof}
For $c>0$ we denote by $A(c)$ the set of functions $g\colon\mathbb{R}\to[0;\infty)$ such that
\begin{enumerate}
\item $g$ is even, nonincreasing on $[0;\infty)$;
\item $g(0)=c$;
\item $\int{g(x)dx}=1$.
\end{enumerate}
\noindent We will find $m(c)=\inf\{\int{x^{2}g(x)dx}: g\in A(c)\}$. Denote $u(x)=c\mathbb{I}_{[-1/(2c);1/(2c)]}(x)$. Observe that $u\in A(c)$. Moreover for any function $g\in A(c)$, using integration by parts we obtain
\begin{align*}
\int{x^{2}g(x)dx}&=2\int_{0}^{\infty}{x^{2}g(x)dx}=
2\int_{0}^{\infty}{x^{2}\Big(\int_{x}^{\infty}{-g(s)ds}\Big)'dx}=
2\int_{0}^{\infty}{(x^{2})'\Big(\int_{x}^{\infty}{g(s)ds}\Big)dx}
\\
&=4\int_{0}^{\infty}{x\Big(\frac{1}{2}-\int_{0}^{x}{g(s)ds}\Big)dx}
\geq4\int_{0}^{\infty}{x\Big(\frac{1}{2}-\int_{0}^{x}{u(s)ds}\Big)dx}=\int{x^{2}u(x)dx}=\frac{1}{12c^2},
\end{align*} 
hence $m(c)=\frac{1}{12c^2}$. Assume now that $g$ satisfies the assumptions of the Proposition. Then $g\in A(g(0))$ and
\begin{align*}
1=\int{x^{2}g(x)dx}\geq m(g(0))=\frac{1}{12(g(0))^{2}},
\end{align*}
which finishes the proof.
\end{proof}

\begin{proof}[Proof of Theorem \ref{tw:10}]
Using Proposition \ref{fakt:6} one can assume that $\mu$ is one dimensional and isotropic (i.e. $\int x^2d\mu=1$). Using Proposition \ref{tw:9} the density of $\mu$ is $g_{\mu}(x)=e^{-W(x)}$, for certain even, convex function $W$. Let $T\colon\mathbb{R}\to\mathbb{R}$ be the increasing reaarangement transporting $\nu$ to $\mu$ i.e.
$T=F^{-1}_{\mu}\circ F_{\nu}$, where $F_{\nu}$ and $F_{\mu}$ are cummulative distribution functions. Then $T$ is nondecreasing, odd and concave on $[0;\infty)$ and $T'(0)=1/(2g_{\mu}(0))\leq\sqrt{3}$, where the last inequality follows from Proposition \ref{l:10.1}. Thus $T$ fulfills assumptions of Theorem \ref{tw:8} with constant $c=\sqrt{3}$ which finishes the proof.


\end{proof}

\subsection{Talagrand's two level concentration inequality for exponential distribution}

The next theorem with rather large constants goes back to Talagrand (see \cite{Tal}). The same result with better constants ($C_{1}=18, C_{2}=6\sqrt{2}$) was obtained in \cite{LW}. The proof that we present improves them even further.\\

\begin{theo}
\label{tw:6}
There exist constants $C_{1},C_{2}$ such that for every $n\geq1$ and Borel set $A\subset\mathbb{R}^n$,
\begin{displaymath}
\nu^{n}(A)=\nu(-\infty;x]\Longrightarrow\forall_{t\geq0}\quad\nu^{n}(A+C_{1}tB_{1}^{n}+C_{2}\sqrt{t}B_{2}^{n})\geq\nu(-\infty;x+t],
\end{displaymath}
moreover one can put $C_{1}=4, \ C_{2}=8.$
\end{theo}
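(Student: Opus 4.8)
The plan is to reduce Theorem \ref{tw:6} to the one-dimensional IC inequality for $\nu$ (Part 2 of Theorem \ref{tw:5}) via the tensorization and concentration machinery already assembled in the paper, and then to extract the two-level enlargement $C_1 t B_1^n + C_2\sqrt{t}B_2^n$ by comparing the cost function $\Lambda^*_{\overline{\nu}}(\cdot/\beta)$ against a suitable combination of the $\ell_1$ and $\ell_2$ gauges. The starting point is that $\nu$ satisfies $\mathrm{IC}(2)$, so the pair $(\nu,\Lambda^*_{\overline{\nu}}(\cdot/2))$ has property $(\tau)$; by Part 1 of Proposition \ref{f:2} (tensorization) the product pair $(\nu^n, W_n)$ has property $(\tau)$ with $W_n(x)=\sum_{i=1}^n \Lambda^*_{\overline{\nu}}(x_i/2)$. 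I would then invoke Part 2 of Proposition \ref{concentration}, which is precisely tailored to this situation: since $\nu^n(A)=\nu(-\infty;x]$, it gives $\nu^n(A+B_{W_n}(2t))\ge \nu(-\infty;x+t]$ for all $t>0$. So everything comes down to producing an explicit two-level inclusion of the form
\begin{align*}
C_1 t B_1^n + C_2\sqrt{t}\,B_2^n \supseteq B_{W_n}(2t).
\end{align*}

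To obtain that inclusion I would analyze the single-coordinate cost $V(s):=\Lambda^*_{\overline{\nu}}(s/2)=2\Lambda^*_\nu(s/4)$, using the explicit formula $\Lambda^*_\nu(u)=\sqrt{u^2+1}-1-\ln\!\big(\tfrac{\sqrt{u^2+1}+1}{2}\big)$ already computed in the proof of Theorem \ref{tw:5}. The key qualitative feature is that $V$ grows \emph{quadratically} near the origin and only \emph{linearly} at infinity; this is exactly the dichotomy that forces the enlargement to split into a Euclidean ($\sqrt t$) part controlling small coordinates and an $\ell_1$ ($t$) part controlling the few large coordinates. Concretely, I would establish a pointwise lower bound of the mixed form $V(s)\ge a\,s^2 \wedge b\,|s|$ (or, more usefully, a statement that $\sum_i V(x_i)\le 2t$ implies both $\sum_i x_i^2 \lesssim t$ on the "small" coordinates and $\sum_i |x_i|\lesssim t$ on the "large" ones), and then split each $x\in B_{W_n}(2t)$ as $x=u+v$ with $u$ carrying the quadratic regime and $v$ the linear regime. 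Summing the two regimes and optimizing the split gives $|u|_2\le C_2\sqrt{t}$ and $|v|_1\le C_1 t$, which is the desired inclusion; the constants $C_1=4$, $C_2=8$ should emerge from the sharp comparison constants $a,b$ in the one-variable estimate for $V$.

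The main obstacle, and where the real work lies, is the sharp one-dimensional comparison that pins down the constants $C_1=4$ and $C_2=8$ rather than merely some finite constants. Tracking the exact thresholds at which the quadratic bound on $V$ hands off to the linear bound, and verifying that the coordinate-wise split can be done uniformly across all $n$ without loss, requires a careful (but elementary) calculus analysis of $\Lambda^*_\nu$; in particular one must check that the worst case over the splitting parameter yields the claimed pair $(4,8)$. A secondary technical point is the appearance of the factor $2t$ (rather than $t$) in Part 2 of Proposition \ref{concentration}, which is absorbed harmlessly into the constants, and the need to treat the degenerate coordinates $x_i$ with $V(x_i)=\infty$ (i.e. outside the domain)—but since $\Lambda^*_\nu$ is finite everywhere for $\nu$ this causes no difficulty. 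Once the sharp single-variable inequality is in hand, the tensorization and the concentration profile from Proposition \ref{concentration} assemble the result mechanically.
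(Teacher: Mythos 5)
Your reduction is exactly the paper's: Theorem \ref{tw:5} gives property $(\tau)$ for $(\nu,\Lambda^{*}_{\overline{\nu}}(\cdot/2))$, tensorization (Part 1 of Proposition \ref{f:2}) passes it to $(\nu^{n},W_{n})$, Part 2 of Proposition \ref{concentration} yields $\nu^{n}(A+B_{W_{n}}(2t))\geq\nu(-\infty;x+t]$, and everything reduces to the inclusion $B_{W_{n}}(2t)\subset C_{1}tB_{1}^{n}+C_{2}\sqrt{t}B_{2}^{n}$. The genuine gap is in how you certify this inclusion. Your scheme assigns each coordinate \emph{entirely} to one of the two balls (``small'' coordinates to the $\ell_{2}$ part, ``large'' ones to the $\ell_{1}$ part), underwritten by a pointwise bound $V(s)\geq as^{2}\wedge b|s|$ for $V(s)=\Lambda^{*}_{\overline{\nu}}(s/2)=2\Lambda^{*}_{\nu}(s/4)$; your parenthetical alternative is the same scheme in disguise. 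Already for $n=1$ this requires: for every $x\neq0$, with $t=\tfrac{1}{2}V(x)$, either $|x|\leq C_{1}t=2V(x)$ or $x^{2}\leq C_{2}^{2}t=32V(x)$. Substituting $u=x/4$, this reads: $\Lambda^{*}_{\nu}(u)\geq|u|$ or $\Lambda^{*}_{\nu}(u)\geq u^{2}/4$. Both inequalities are false at \emph{every} $u\neq0$: since $\Lambda_{\nu}(t)=-\ln(1-t^{2})\geq t^{2}$, Legendre duality gives $\Lambda^{*}_{\nu}(u)\leq u^{2}/4$ with equality only at $0$; and from the explicit formula $\Lambda^{*}_{\nu}(u)=\sqrt{u^{2}+1}-1-\ln\bigl(\tfrac{\sqrt{u^{2}+1}+1}{2}\bigr)\leq\sqrt{u^{2}+1}-1<|u|$ for $u\neq0$. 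So no choice of $a,b$ and no threshold can make your scheme produce $(C_{1},C_{2})=(4,8)$; it yields only strictly worse constants, with a real tradeoff (to push $C_{1}$ toward $4$ you need the linear bound with $b$ near $1/2$, which forces the threshold to infinity and hence $a\to0$, i.e.\ $C_{2}\to\infty$).

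The idea you are missing -- and the heart of the paper's proof -- is to split each coordinate \emph{itself} into a sum of two nonzero parts rather than assign it wholly: $x_{i}=y_{i}+z_{i}$ with $|y_{i}|\leq C_{1}W(x_{i})/a$ and $|z_{i}|\leq C_{2}\sqrt{W(x_{i})/a}$; these bounds add (resp.\ square-add) across coordinates exactly as required, which is the content of Lemma \ref{lem:6.1}. The one-dimensional statement then becomes existential: for every $x$ there exists $y$ with $|x-y|\leq2W(x)$ and $y^{2}\leq32W(x)$, equivalently (after rescaling) $\Lambda^{*}_{\nu}(x)\geq\inf_{y}\max\bigl\{|x-\tfrac{y}{4}|,(\tfrac{y}{8})^{2}\bigr\}$ -- an infimum over decompositions, strictly weaker than your $\min$-type condition. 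The paper realizes it with the explicit choice $y(x)=8\,\mathrm{sgn}(x)(\sqrt{1+|x|}-1)$, for which the two constraints coincide and both equal $(\sqrt{1+|x|}-1)^{2}$, so the whole theorem rests on the single inequality $\Lambda^{*}_{\nu}(x)\geq(\sqrt{1+|x|}-1)^{2}$ (Lemma \ref{lem:6.2}). Note that $(\sqrt{1+|x|}-1)^{2}$ interpolates between the quadratic and linear regimes and lies below $\min\{x^{2}/4,|x|\}$ everywhere; that slack is precisely what your two-regime bound cannot capture, and why your approach stalls at worse constants while the paper's reaches $(4,8)$.
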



In the proof of Theorem \ref{tw:6} we will use two lemmas\\
\begin{lem}
\label{lem:6.1}
Assume that $W\colon\mathbb{R}\to[0;\infty]$ satisfies for certain constants $a,C_{1},C_{2}>0$
\begin{align*}
\forall_{t>0}\quad B_{W}(at)\subset C_{1}tB_{1}^{1}+C_{2}\sqrt{t}B_{2}^{1},
\end{align*}
then for every $n\geq 1$ one has
\begin{align*}
\forall_{t>0}\quad B_{W_{n}}(at)\subset C_{1}tB_{1}^{n}+C_{2}\sqrt{t}B_{2}^{n},
\end{align*}
where $W_{n}(x)=\sum_{i=1}^{n}W(x_{i})$.
\end{lem}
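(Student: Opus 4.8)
The plan is to reduce the $n$-dimensional statement to the one-dimensional hypothesis by exploiting the additive structure $W_n(x)=\sum_{i=1}^n W(x_i)$. The key observation is that a point $x\in B_{W_n}(at)$ has $\sum_i W(x_i)\le at$, so if I write $t_i=W(x_i)/a$ then $\sum_i t_i\le t$ and each coordinate satisfies $x_i\in B_W(at_i)$. The one-dimensional hypothesis then gives a decomposition $x_i=u_i+v_i$ with $|u_i|\le C_1 t_i$ and $|v_i|\le C_2\sqrt{t_i}$. Setting $u=(u_1,\dots,u_n)$ and $v=(v_1,\dots,v_n)$, I get $x=u+v$, and it remains to check the two norm bounds $|u|_1\le C_1 t$ and $|v|_2\le C_2\sqrt{t}$.

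First I would make the coordinatewise decomposition precise. For each $i$ with $t_i>0$ the inclusion $B_W(at_i)\subset C_1 t_i B_1^1+C_2\sqrt{t_i}\,B_2^1$ gives the splitting $x_i=u_i+v_i$ with the stated bounds; for $t_i=0$ one has $W(x_i)\le 0$, hence $x_i\in B_W(0)$, and taking $t\downarrow 0$ in the hypothesis forces $x_i=0$, so I set $u_i=v_i=0$. This produces the global decomposition $x=u+v$.

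Next I would verify the two norms. The $\ell_1$ bound is immediate and linear:
\begin{align*}
|u|_1=\sum_{i=1}^n|u_i|\le\sum_{i=1}^n C_1 t_i=C_1\sum_{i=1}^n t_i\le C_1 t.
\end{align*}
The $\ell_2$ bound requires one inequality: since $|v_i|\le C_2\sqrt{t_i}$,
\begin{align*}
|v|_2^2=\sum_{i=1}^n|v_i|^2\le\sum_{i=1}^n C_2^2 t_i=C_2^2\sum_{i=1}^n t_i\le C_2^2 t,
\end{align*}
so $|v|_2\le C_2\sqrt{t}$. Hence $x=u+v\in C_1 t B_1^n+C_2\sqrt{t}\,B_2^n$, which is the desired inclusion.

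The only subtle point is that the $\ell_2$ estimate works precisely because squaring $|v_i|\le C_2\sqrt{t_i}$ turns the square-root growth into a term linear in $t_i$, so summing and using $\sum_i t_i\le t$ keeps the total at order $\sqrt{t}$ rather than degrading; the matching structure of the two parts of the Talagrand ball (an $\ell_1$ part scaling like $t$ and an $\ell_2$ part scaling like $\sqrt{t}$) is exactly what makes the additive tensorization go through. I do not expect a genuine obstacle here: the argument is a direct consequence of the one-dimensional hypothesis together with the superadditivity of the splitting budget $t=\sum_i t_i$, and no convexity or property $(\tau)$ input is needed at this step.
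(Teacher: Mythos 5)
Your proof is correct and takes essentially the same route as the paper's: a coordinatewise decomposition with budget $t_i = W(x_i)/a$, the $\ell_1$ bound summed linearly, and the $\ell_2$ bound obtained by squaring $|v_i|\le C_2\sqrt{t_i}$ so that $\sum_i t_i\le t$ closes both estimates. Your explicit handling of the degenerate case $W(x_i)=0$ (forcing $x_i=0$ by letting $t\downarrow 0$) is a minor refinement that the paper's proof glosses over, but otherwise the arguments coincide.
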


\begin{proof}
Let $n\geq1, t>0$ and $x\in B_{W_{n}}(at)$. Observe that
\begin{align*}
x_{i}\in B_{W}\Big(a\frac{W(x_{i})}{a}\Big)\subset C_{1}\frac{W(x_{i})}{a}B_{1}^{1}+C_{2}\sqrt{\frac{W(x_{i})}{a}}B_{2}^{1}.
\end{align*} 
Thus $x_{i}=y_{i}+z_{i}$, where $|y_{i}|\leq C_{1}\frac{W(x_{i})}{a}$ and $|z_{i}|\leq C_{2}\sqrt{\frac{W(x_{i})}{a}}$. \\
Moreover 
\begin{align*}
|y|_{1}=\sum_{i=1}^{n}|y_{i}|\leq\sum_{i=1}^{n} C_{1}\frac{W(x_{i})}{a}=C_{1}\frac{W_{n}(x)}{a}\leq C_{1}\frac{at}{a}=C_{1}t
\end{align*}
so $y\in C_{1}tB_{1}^{n}$ and
\begin{align*}
|z|_{2}=\sqrt{\sum_{i=1}^{n}z_{i}^2}\leq\sqrt{\sum_{i=1}^{n}C_{2}^{2}\frac{W(x_{i})}{a}}\leq\sqrt{C_{2}^{2}\frac{W_{n}(x)}{a}}\leq C_{2}\sqrt{\frac{at}{a}}=C_{2}\sqrt{t},
\end{align*}
so $z\in C_{2}\sqrt{t}B_{2}^{n}$, hence $x=y+z\in C_{1}tB_{1}^{n}+C_{2}\sqrt{t}B_{2}^{n}$.
\end{proof}

\begin{lem}
\label{lem:6.2}
The Cramer transform of a symmetric exponential distribution satisfies
\begin{align*}
\Lambda^{*}_{\nu}(x)\geq\Big(\sqrt{1+|x|}-1\Big)^{2}.
\end{align*}
\end{lem}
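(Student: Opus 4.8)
The plan is to prove the explicit lower bound
$\Lambda^{*}_{\nu}(x)\geq(\sqrt{1+|x|}-1)^{2}$
by exploiting the closed form of the Cramer transform already computed earlier in the paper, namely
\begin{align*}
\Lambda^{*}_{\nu}(x)=\sqrt{x^{2}+1}-1-\ln\Big(\frac{\sqrt{x^{2}+1}+1}{2}\Big).
\end{align*}
Since both sides are even, I would assume $x\geq 0$ throughout and set $\psi(x)=\Lambda^{*}_{\nu}(x)-(\sqrt{1+x}-1)^{2}$, reducing the claim to showing $\psi\geq 0$ on $[0;\infty)$.

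First I would record the boundary behaviour: at $x=0$ both $\Lambda^{*}_{\nu}$ and the candidate lower bound vanish, so $\psi(0)=0$. The natural route is then to show $\psi$ is nondecreasing, i.e. $\psi'(x)\geq 0$ for $x\geq 0$. Using the derivative formula from the text, $\frac{d}{dx}\Lambda^{*}_{\nu}(x)=\frac{x}{\sqrt{x^{2}+1}+1}$, and differentiating the right-hand side gives
\begin{align*}
\frac{d}{dx}(\sqrt{1+x}-1)^{2}=1-\frac{1}{\sqrt{1+x}},
\end{align*}
so the inequality to establish becomes
\begin{align*}
\frac{x}{\sqrt{x^{2}+1}+1}\geq 1-\frac{1}{\sqrt{1+x}}.
\end{align*}

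The main obstacle is verifying this derivative inequality cleanly; I expect a direct but slightly delicate algebraic manipulation. The plan is to rationalize the left-hand side, writing $\frac{x}{\sqrt{x^{2}+1}+1}=\frac{\sqrt{x^{2}+1}-1}{x}$ (valid for $x>0$), and to rewrite the right-hand side as $\frac{\sqrt{1+x}-1}{\sqrt{1+x}}$. Cross-multiplying (all quantities being positive for $x>0$) reduces the inequality to a polynomial-type comparison between $\sqrt{1+x}(\sqrt{x^{2}+1}-1)$ and $x(\sqrt{1+x}-1)$, which I would handle by squaring and reducing to a manifestly true inequality in $x\geq 0$, taking care that squaring preserves the direction of the inequality since both sides are nonnegative. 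An alternative, possibly simpler, route is to compare second derivatives or to use the elementary bound $\sqrt{x^{2}+1}\leq 1+\tfrac{x^{2}}{2}$ together with the concavity estimate $\ln\big(\tfrac{\sqrt{x^{2}+1}+1}{2}\big)\leq \tfrac{\sqrt{x^{2}+1}-1}{2}$ to bound $\Lambda^{*}_{\nu}$ from below directly, but the monotonicity-of-$\psi$ approach is the most transparent. Once $\psi'\geq 0$ is confirmed, combining it with $\psi(0)=0$ yields $\psi\geq 0$ on $[0;\infty)$ and hence the claimed inequality for all $x$ by evenness.
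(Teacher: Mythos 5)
Your proposal is correct and follows the paper's own strategy almost verbatim: the paper likewise uses the closed form of $\Lambda^{*}_{\nu}$, sets $H(x)=\Lambda^{*}_{\nu}(x)-(\sqrt{1+|x|}-1)^{2}$, notes $H(0)=0$, and proves $H'\geq0$ on $[0;\infty)$, which is exactly your derivative inequality $\frac{x}{\sqrt{x^{2}+1}+1}\geq 1-\frac{1}{\sqrt{1+x}}$. The only divergence is in how that inequality is verified: the paper splits into two cases, using $x^{2}\leq x$ to replace $\sqrt{1+x^{2}}$ by $\sqrt{1+x}$ when $0<x\leq1$, and the tangent-line bound $\sqrt{1+x^{2}}\leq\sqrt{2}-1+x$ when $x>1$, whereas you rationalize to $\frac{\sqrt{x^{2}+1}-1}{x}\geq\frac{\sqrt{1+x}-1}{\sqrt{1+x}}$ and clear denominators. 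Your route does close, but be aware that a single squaring does not eliminate both radicals; you must rearrange first: the cross-multiplied inequality is equivalent to $\sqrt{1+x}\sqrt{1+x^{2}}\geq(1+x)^{3/2}-x$, squaring (both sides nonnegative) and simplifying yields $2(1+x)^{3/2}\geq 3x+2$, and squaring once more gives $4x^{3}+3x^{2}\geq0$, which is manifestly true — so your method is sound and arguably cleaner than the paper's case analysis, since it needs no ad hoc auxiliary estimates. One caution: the alternative you float in passing does not work. The bound $\ln\big(\frac{\sqrt{x^{2}+1}+1}{2}\big)\leq\frac{\sqrt{x^{2}+1}-1}{2}$ only yields $\Lambda^{*}_{\nu}(x)\geq\frac{\sqrt{x^{2}+1}-1}{2}$, which drops below $(\sqrt{1+x}-1)^{2}$ for large $x$ (at $x=8$: $(\sqrt{65}-1)/2\approx3.53<4$), and $\sqrt{x^{2}+1}\leq1+x^{2}/2$ is an upper bound on a quantity in which $\Lambda^{*}_{\nu}$ is increasing, so it cannot produce a lower bound; stick with the monotonicity argument.
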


\begin{proof}
Denote $H(x)=\Lambda^{*}_{\nu}(x)-\Big(\sqrt{1+|x|}-1\Big)^{2}$. We need to show that $H\geq0$. Since $H$ is even we can assume that $x\geq0$. Standard computation gives
\begin{align*}
H'(x)=\frac{x}{1+\sqrt{x^{2}+1}}-1+\frac{1}{\sqrt{1+x}}.
\end{align*}
We will show that $H'\geq0$, from which using $H(0)=0$ the claim follows.\\
\ \\
\textbf{Case 1.} $(0<x\leq1)$\\
We compute
\begin{align*}
H'(x)=\frac{x}{1+\sqrt{1+x^2}}-1+\frac{1}{\sqrt{1+x}}\geq\frac{x}{1+\sqrt{1+x}}-1+\frac{1}{\sqrt{1+x}}=\frac{x(\sqrt{1+x}-1)}{(1+\sqrt{1+x})\sqrt{1+x}}\geq0.
\end{align*}
\textbf{Case 2.} $(x>1)$\\
Using $\sqrt{1+x^2}\leq\sqrt{2}-1+x$, we obtain
\begin{align*}
H'(x)&=\frac{x}{1+\sqrt{1+x^2}}-1+\frac{1}{\sqrt{1+x}}\geq\frac{x}{x+\sqrt{2}}-1+\frac{1}{\sqrt{1+x}}=\frac{x+\sqrt{2}-\sqrt{2}\sqrt{x+1}}{\sqrt{1+x}(x+\sqrt{2})}
\\
&=\frac{(\sqrt{x+1}-\frac{\sqrt{2}}{2})^{2}+\sqrt{2}-\frac{3}{2}}{\sqrt{1+x}(x+\sqrt{2})}\geq0,
\end{align*}
since 
\begin{align*}
\Big(\sqrt{x+1}-\frac{\sqrt{2}}{2}\Big)^{2}+\sqrt{2}-\frac{3}{2}\geq\Big(\sqrt{1+1}-\frac{\sqrt{2}}{2}\Big)^{2}+\sqrt{2}-\frac{3}{2}=\sqrt{2}-1>0.
\end{align*}
\end{proof}

\begin{proof}[Proof of Theorem \ref{tw:6}]
Let $n\geq1$ and $A\subset\mathbb{R}^n$ be such that $\nu^{n}(A)=\nu(-\infty;x]$. Using Proposition \ref{concentration} and Theorem \ref{tw:5} we obtain that 
\begin{align*}
\forall_{t>0}\quad\nu^{n}(A+B_{W_{n}}(2t))\geq\nu(-\infty;x+t], 
\end{align*}
where 
\begin{align*}
W_{n}(x)=\sum_{i=1}^{n}W(x_{i}),\quad W(x)=\Lambda_{\overline{\nu}}^{*}(x/2)=2\Lambda_{\nu}^{*}(x/4).
\end{align*}
To finish the proof it suffices to show that
\begin{align*}
\forall_{t>0}\quad B_{W_{n}}(2t)\subset 4tB_{1}^{n}+8\sqrt{t}B_{2}^{n}, 
\end{align*}
which by the Lemma \ref{lem:6.1} reduces to 
\begin{align*}
\forall_{t>0}\quad B_{W}(2t)\subset 4tB_{1}^{1}+8\sqrt{t}B_{2}^{1}.
\end{align*}
The last condition is equivalent to
\begin{align*}
\forall_{t>0}\forall_{x}\quad W(x)\leq2t\Longrightarrow\exists_{y}\quad|x-y|\leq 4t,y\leq8\sqrt{t}, 
\end{align*}
which follows from
\begin{align*}
\forall_{x}\exists_{y}\quad|x-y|\leq2W(x),y^{2}\leq32W(x).
\end{align*}
To finish the proof it suffices to show that
\begin{align*}
\forall_{x}\exists_{y}\quad \frac{1}{2}W(4x)=\Lambda^{*}_{\nu}(x)\geq\max\Big\{\Big|x-\frac{y}{4}\Big|,\Big(\frac{y}{8}\Big)^{2}\Big\}. 
\end{align*}
Since for $y(x)=8{\rm{sgn}}(x)(\sqrt{|x|+1}-1)$ we get
\begin{align*}
\Big|x-\frac{y(x)}{4}\Big|=\Big(\frac{y(x)}{8}\Big)^{2}=\Big(\sqrt{1+|x|}-1\Big)^{2},
\end{align*}
thus the last inequality follows from Lemma \ref{lem:6.2}.
\end{proof}

\section{Acknowledgement}
The results presented in the article were obtained during the author's MSc studies at the University of Warsaw under supervision of prof. Rafa{\l} Lata{\l}a

\end{document}